\newcommand{\BB}{\mathbf{B}}
\newcommand{\RR}{\mathbb{R}}
\newcommand{\levdist}{1.2cm}
\renewcommand{\p@enumii}{}
\DeclareMathOperator{\card}{card}
\DeclareMathOperator{\diam}{diam}
\newtheorem{theorem}{Theorem}[section]
\newtheorem*{theorem*}{Theorem}
\newtheorem{corollary}[theorem]{Corollary}
\newtheorem{corollary*}{Corollary}
\newtheorem{lemma}[theorem]{Lemma}
\newtheorem{proposition}[theorem]{Proposition}
\newtheorem{conjecture}[theorem]{Conjecture}
\theoremstyle{definition}
\newtheorem{definition}[theorem]{Definition}
\newtheorem{example}[theorem]{Example}
\newtheorem{problem}[theorem]{Problem}
\theoremstyle{remark}
\newtheorem{remark}[theorem]{Remark}
\numberwithin{equation}{section}
\newcommand{\acr}{\newline\indent}
\begin{document}

\title[Locally finite ultrametric spaces and labeled trees]{Locally finite ultrametric spaces \\ and labeled trees}
\author{Oleksiy Dovgoshey and Alexander Kostikov}

\address{\textbf{Oleksiy Dovgoshey}\acr
Institute of Applied Mathematics and Mechanics \acr
of the NAS of Ukraine, Sloviansk, Ukraine;\acr
Department of Mathematics and Statistics \acr
University of Turku, Turku, Finland}
\email{oleksiy.dovgoshey@gmail.com}

\address{\textbf{Alexander Kostikov}\acr
LLC Technical  University
``Metinvest Politechnic'' \acr
Zaporizhia, Ukraine}
\email{alexkst63@gmail.com}

\begin{abstract}
It is shown that a locally finite ultrametric space \((X, d)\) is generated by labeled tree if and only if, for every open ball \(B \subseteq X\), there is a point \(c \in B\) such that \(d(x, c) = \diam B\)  whenever \(x \in B\) and \(x \neq c\). For every finite ultrametric space $Y$ we construct an ultrametric space $Z$ having the smallest possible number of points such that $Z$ is generated by labeled tree and $Y$ is isometric to a subspace of $Z$. It is proved that for a given $Y$, such a space $Z$ is unique up to isometry.
\end{abstract}

\keywords{Complete multipartite graph, diameter of ultrametric space, labeled tree, locally finite ultrametric space}

\subjclass[2020]{Primary 54E35; Secondary 54E4}

\maketitle

\section{Introduction}

In what follows, we will denote by \(\RR^{+}\) the half-open interval \([0, \infty)\).

A \textit{metric} on a set $X$ is a function $d\colon X\times X\rightarrow \RR^+$ such that for all \(x\), \(y\), \(z \in X\)
\begin{enumerate}
\item $d(x,y)=d(y,x)$,
\item $(d(x,y)=0)\Leftrightarrow (x=y)$,
\item \(d(x,y)\leq d(x,z) + d(z,y)\).
\end{enumerate}

A metric space \((X, d)\) is \emph{ultrametric} if the \emph{strong triangle inequality}
\[
d(x,y)\leq \max \{d(x,z),d(z,y)\}.
\]
holds for all \(x\), \(y\), \(z \in X\). In this case the function \(d\) is called \emph{an ultrametric} on \(X\).

\begin{definition}\label{d1.1}
Let \((X, d)\) and \((Y, \rho)\) be metric spaces. A mapping \(\Phi \colon X \to Y\) is an \emph{isometric embedding}  if
\[
d(x,y) = \rho(\Phi(x), \Phi(y))
\]
holds for all \(x\), \(y \in X\). A bijective isometric embedding is said to be \emph{isometry}. Metric spaces are \emph{isometric} if there is an isometry of these spaces.
\end{definition}

Let \((X, d)\) be a metric space. An \emph{open ball} with a \emph{radius} \(r > 0\) and a \emph{center} \(c \in X\) is the set
\[
B_r(c) = \{x \in X \colon d(c, x) < r\}.
\]

\begin{definition}\label{d1.1_1}
A metric space \((X, d)\) is called \emph{locally finite} if card $B$ is finite for every open ball \( B \subseteq X \).
\end{definition}

In addition to open balls, we also need some other subsets of \((X, d)\), which we will call the centered spheres.

\begin{definition}\label{d1.2}
Let \((X, d)\) be a metric space. A set \(C \subseteq X\) is a \emph{centered sphere} in \((X, d)\) if there are \(c \in C\), the center of \(C\), and \(r \in \RR^+\), the radius of \(C\), such that
\begin{equation}\label{d1.2:e1}
C = \{x \in X \colon d(x, c) = r\} \cup \{c\}.
\end{equation}
\end{definition}

Equality \eqref{d1.2:e1} means that \(C\) is the sphere \(\{x \in X \colon d(x, c) = r\}\) with the added center \(c\).

We denote by \(\mathbf{B}_X = \mathbf{B}_{X, d}\) and  \(\mathbf{Cs}_X = \mathbf{Cs}_{X, d}\) the sets of all open balls of the metric space \((X, d)\) and, respectively, the set of all centered spheres of this space.

\begin{definition}\label{d1.3}
A \emph{labeled tree} is a pair \((T, l)\), where \(T\) is a tree and \(l\) is a mapping defined on the vertex set \(V(T)\).
\end{definition}

In what follows, we will consider only the nonnegative real-valued labelings \(l\colon V(T)\to \RR^{+}\).

 Following~\cite{Dov2020TaAoG}, we define a mapping \(d_l \colon V(T) \times V(T) \to \RR^{+}\) as
\begin{equation}\label{e1.1}
d_l(u, v) = \begin{cases}
0 & \text{if } u = v,\\
\max\limits_{w \in V(P)} l(w) & \text{if } u \neq v,
\end{cases}
\end{equation}
where \(P\) is the path joining \(u\) and \(v\) in \(T\).

\begin{theorem}[\cite{DK2022AC}]\label{t1.4}
Let \(T = T(l)\) be a labeled tree. Then the function \(d_l\) is an ultrametric on \(V(T)\) if and only if the inequality
\begin{equation*}
\max\{l(u), l(v)\} > 0
\end{equation*}
holds for every edge \(\{u, v\}\) of \(T\).
\end{theorem}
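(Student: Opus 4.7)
The plan is to prove the two directions separately, with the bulk of the work in the "if" direction being the strong triangle inequality.

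For the \emph{only if} direction, I argue by contraposition. Suppose there exists an edge $\{u,v\}$ with $l(u)=l(v)=0$. Since $\{u,v\}$ is an edge, the path $P$ joining $u$ and $v$ in $T$ has vertex set $\{u,v\}$, and therefore
\[
d_l(u,v)=\max_{w\in V(P)} l(w)=\max\{l(u),l(v)\}=0,
\]
while $u\neq v$. This violates axiom~(ii), so $d_l$ is not even a metric, let alone an ultrametric.

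For the \emph{if} direction, I would verify the four required properties in turn. Symmetry is immediate from the fact that $P$ is the same unordered path whether we travel from $u$ to $v$ or from $v$ to $u$. Nonnegativity follows because $l\colon V(T)\to\RR^+$. The implication $d_l(u,v)=0\Rightarrow u=v$ is where we use the hypothesis: if $u\neq v$, the path $P$ from $u$ to $v$ contains at least one edge $\{x,y\}$, and by assumption $\max\{l(x),l(y)\}>0$, hence $d_l(u,v)\geq \max\{l(x),l(y)\}>0$.

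The strong triangle inequality is the step that requires a genuine tree argument and will be the main obstacle. Given $u,v,w\in V(T)$, I would like to show
\[
d_l(u,v)\leq \max\{d_l(u,w),d_l(w,v)\}.
\]
If any two of $u,v,w$ coincide, the inequality is trivial, so assume they are pairwise distinct, and let $P_{uv}$, $P_{uw}$, $P_{wv}$ be the corresponding paths in $T$. The key combinatorial fact I will invoke is that in a tree, the (unique) path $P_{uv}$ satisfies
\[
V(P_{uv})\subseteq V(P_{uw})\cup V(P_{wv}).
\]
This follows because $P_{uw}\cup P_{wv}$ is a connected subgraph of $T$ containing both $u$ and $v$, so it must contain $P_{uv}$; uniqueness of paths in a tree is essential here. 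Taking maxima of $l$ over vertex sets then yields
\[
d_l(u,v)=\max_{x\in V(P_{uv})} l(x)\leq \max\Bigl\{\max_{x\in V(P_{uw})} l(x),\ \max_{x\in V(P_{wv})} l(x)\Bigr\}=\max\{d_l(u,w),d_l(w,v)\},
\]
which is exactly the strong triangle inequality. Combined with the earlier verifications, this shows $d_l$ is an ultrametric, completing the proof.
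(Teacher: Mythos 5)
Your proof is correct; the paper itself states Theorem~\ref{t1.4} as a cited result from \cite{DK2022AC} and gives no proof, so there is nothing internal to compare against. Your argument is the natural one: the contrapositive for the ``only if'' direction, and for the strong triangle inequality the key tree-theoretic fact that \(V(P_{uv})\subseteq V(P_{uw})\cup V(P_{wv})\), which follows from connectedness of the union (both paths contain \(w\)) together with uniqueness of paths in a tree.
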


Let us introduce a class \(\mathbf{UGVL}\) (Ultrametrics Generating by Vertex Labelings) by the rule: An ultrametric space \((X, d)\) belongs to \(\mathbf{UGVL}\) if and only if there is a labeled tree \(T=T(l)\) satisfying \(X = V(T)\) and \(d(x, y) = d_l(x, y)\) for all \(x\), \(y \in X\). If \((X, d) \in \mathbf{UGVL}\), then we say that \((X, d)\) is generated by labeled tree or that \((X, d)\) is an \(\mathbf{UGVL}\)-space.

The following conjecture was formulated in \cite{DK2022AC}.

\begin{conjecture}\label{con1.5}
Let \((X, d)\) be a nonempty totally bounded ultrametric space. If all points of $X$ are isolated, then the following statements are equivalent:
\begin{enumerate}
\item\label{con1.5:s1} \((X, d) \in \mathbf{UGVL}\).
\item\label{con1.5:s2} \(\BB_{X, d} \subseteq \mathbf{Cs}_{X, d}\).
\end{enumerate}
\end{conjecture}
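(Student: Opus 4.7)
My plan is to prove the equivalence by arguing $(i) \Rightarrow (ii)$ directly from the definition of the tree-induced ultrametric $d_l$ and $(ii) \Rightarrow (i)$ by constructing a labeled tree with vertex set $X$ organized by the hierarchy of open balls. It will be convenient to use the reformulation of condition $(ii)$ (i.e.\ $\BB_{X,d} \subseteq \mathbf{Cs}_{X,d}$) as the statement: for every open ball $B$, there exists $c \in B$ with $d(x, c) = \diam B$ for all $x \in B \setminus \{c\}$.

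For the forward direction, assume $(X, d) = (V(T), d_l)$ for a labeled tree $(T, l)$, and fix an open ball $B = B_r(c_0)$. Formula \eqref{e1.1} implies that $d_l(u, v) < r$ if and only if the $T$-path between $u$ and $v$ has all vertices of label strictly less than $r$, so $B$ is exactly the connected component of $c_0$ in the induced subgraph $T[\{v : l(v) < r\}]$, and hence $\diam B = \sup_{v \in B} l(v)$. Under total boundedness this supremum is attained at some $c \in B$. Since the $T$-path from any $x \in B \setminus \{c\}$ to $c$ has $c$ as an endpoint, $d_l(x, c) \geq l(c) = \diam B$, and the reverse inequality is immediate from $x, c \in B$.

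For the reverse direction I would exploit the standard fact that any two open balls in an ultrametric space are either disjoint or comparable by inclusion, so $\BB_{X,d}$ is a tree-like hierarchy rooted at $X$ itself (which is an open ball by boundedness). Condition $(ii)$ provides a distinguished center $c_B \in B$ for each ball $B$ with at least two points. I would construct $(T, l)$ by taking $V(T) = X$, joining $c_B$ to $c_{B'}$ whenever $B'$ is a maximal proper subball of $B$, and setting $l(c_B) = \diam B$. The key verification is that $B \mapsto c_B$ establishes a bijection between balls of the hierarchy and points of $X$: each $x \in B \setminus \{c_B\}$ lies in a unique maximal proper subball $B_x \subsetneq B$, inside which the same analysis recurses, and isolation forces this descent to terminate at the singleton $\{x\}$. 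For any pair $x, y \in X$, identifying the smallest ball $B(x, y)$ that contains both gives $d(x, y) = \diam B(x, y)$, which is the maximum label on the $T$-path from $x$ to $y$, i.e.\ $d_l(x, y)$, as required.

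The main obstacle will be the bookkeeping for the bijection $B \mapsto c_B$ in the totally bounded but possibly infinite setting, where induction on cardinality or on diameter is not directly available. The strategy is to stratify balls by diameter, use total boundedness to bound the number of pairwise disjoint balls of diameter at least $\epsilon$ for each $\epsilon > 0$, and combine $(ii)$ with isolation to show that the descending chain of balls through any fixed point is well founded and exhausts $\{x\}$ in the limit. Once this is established, verifying that the constructed labeling $l$ satisfies the hypothesis of Theorem~\ref{t1.4} and reproduces $d$ reduces to the routine calculation sketched above.
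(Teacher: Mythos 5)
Note first that the paper does not actually prove this statement: it is left as a conjecture, and the paper only establishes the finite case (Theorem~\ref{t4.4}) and the locally finite case (Theorem~\ref{t4.5}). A totally bounded ultrametric space with all points isolated need not be locally finite (take $X=\{x_n\colon n\in\NN\}$ with $d(x_n,x_m)=\max\{1/n,1/m\}$ for $n\neq m$), so your proposal, if completed, would go beyond what the paper proves, and your route is genuinely different from the arguments the paper does give. For \ref{con1.5:s1}$\Rightarrow$\ref{con1.5:s2} the paper restricts the generating tree to a finite ball (Lemma~\ref{l4.3}) and runs a maximum-of-labels argument over the finitely many parts of the diametrical graph; you instead use total boundedness directly to show that $\sup_{v\in B}l(v)$ is attained (a sequence with labels strictly increasing to $s>0$ is eventually $s/2$-separated), which survives when balls are infinite. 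For \ref{con1.5:s2}$\Rightarrow$\ref{con1.5:s1} the paper inducts on cardinality and, in the infinite locally finite case, exhausts $X$ by an increasing chain of finite balls (Proposition~\ref{c3.11} and Corollary~\ref{c4.4}); you instead build the tree top-down from the ball hierarchy, which needs no finiteness of balls. Your outline is sound, but two points must be made precise before it is a proof. First, you need that $\diam B$ is attained for every ball $B$ with $\card B\geqslant 2$, since otherwise $G_{B,d|_{B\times B}}$ is empty and Theorem~\ref{t3.3} yields no parts; this does follow from total boundedness (cover $B$ by finitely many balls of radius $\tfrac12\diam B$ and observe that the distance between points of two fixed such balls is constant), but it is not automatic and must be stated. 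Second, $B\mapsto c_B$ is not a bijection onto $X$: if $x=c_B$ for a ball $B$ with $\card B\geqslant 2$, then the part of $G_{B,d|_{B\times B}}$ containing $x$ is forced to equal $\{x\}$ (any other point $y$ of that part would satisfy $d(x,y)<\diam B$, contradicting centrality), so each point of $X$ is the distinguished center of at most one non-singleton ball, and your tree is really the ball hierarchy with every internal node contracted onto its distinguished singleton child; this observation is exactly what makes the labeling $l(x)=\diam B$ well defined. With these repairs your termination argument (an infinite descending chain of parts with diameters bounded below by $\delta$ produces an infinite $\delta$-separated set, and isolation then forces the chain to end at $\{x\}$) is correct, and the final verification $d_l(x,y)=\diam B(x,y)=d(x,y)$ goes through because diameters strictly decrease down the hierarchy.
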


E. Petrov proved in \cite{Pet} the validity of the conjecture for finite ultrametric spaces using some other terms and the technique of Gurvich--Vyalyiy representing trees. We repeat this result in Theorem~\ref{t4.4} of the fourth section of the paper.

In Theorem~\ref{t4.5} it is shown that the equivalence
$$
\left( (X,d)\in \mathbf{UGVL} \right) \Leftrightarrow \left( \mathbf{B}_{X,d} \subseteq \mathbf{Cs}_{X,d} \right)
$$
is valid for all nonempty locally finite ultrametric spaces \( (X,d)\).

Theorem~\ref{t4.8} shows that \( \mathbf{Cs}_{X,d}  \subseteq \mathbf{B}_{X,d} \) holds if and only if $d$ is a discrete metric on $X$.

In Theorem~\ref{t5.4},  we construct the ``minimal'' \textbf{UGVL}-extensions of arbitrary finite ultrametric space and prove that all such minimal extensions are isometric.

\section{Preliminaries. Trees and complete multipartite graphs}

A \textit{simple graph} is a pair \((V,E)\) consisting of a nonempty set \(V\) and a set \(E\) whose elements are unordered pairs \(\{u, v\}\) of different points \(u\), \(v \in V\). For a graph \(G = (V, E)\), the sets \(V=V(G)\) and \(E = E(G)\) are called \textit{the set of vertices} and \textit{the set of edges}, respectively. We say that \(G\) is \emph{empty} if \(E(G) = \varnothing\). A graph \(G\) is \emph{finite} if \(V(G)\) is a finite set. A graph \(H\) is, by definition, a \emph{subgraph} of a graph \(G\) if the inclusions \(V(H) \subseteq V(G)\) and \(E(H) \subseteq E(G)\) are valid. In this case we simply write $H \subseteq G$.

A \emph{path} is a finite nonempty graph \(P\) whose vertices can be numbered so that
\begin{align*}
V(P) &= \{x_0,x_1, \ldots,x_k\},\ k \geqslant 1, \\
E(P) & = \bigl\{\{x_0, x_1\}, \ldots, \{x_{k-1}, x_k\}\bigr\}.
\end{align*}
In this case we say that \(P\) is a path joining \(x_0\) and \(x_k\).

A graph \(G\) is \emph{connected} if for every two distinct \(u\), \(v \in V(G)\) there is a path in \(G\) joining \(u\) and \(v\).

A finite graph \(C\), with \(\card V(G) \geqslant 3\), is a \emph{cycle} if there is an enumeration of its vertices without repetitions such that
\begin{align*}
V(C) & = \{v_1, \ldots, v_n\}, \\
E(C) & = \{\{v_1, v_2\}, \ldots, \{v_{n-1}, v_{n}\}, \{v_{n}, v_{1}\}\}.
\end{align*}

\begin{definition}\label{d2.1}
A connected graph without cycles is called a \emph{tree}.
\end{definition}

A tree $T$ may have a distinguished vertex $r$ called the \emph{root} in this case $T=T(r)$ is called a \emph{rooted tree}.

\begin{definition}\label{d2.2}
If $u$ and $v$ are vertices of a rooted tree $T=T(r)$, then $u$ is a \emph{successor} of $v$
 if the path $P \subseteq T$ joining $u$ and $r$ contains the node $v$. A successor $u$ of a node $v$ is said to be a \emph{direct successor} of the node $v$ if $\{u,v\} \in E(T)$ holds.
 \end{definition}

Let $T = T(r)$ be a rooted tree and let $v$ be a node of $T$. Denote by $\delta^+(v)$ the \emph{out-degree} of $v$, i.e., $\delta^+(v)$ is the number of direct successors of $v$.  The root $r$ is a leaf of $T$ if and only if $\delta^+(r) \leqslant 1$. Moreover, for a vertex $v$ different from the root $r$, the equality $\delta^+(v) = 0$ holds if and only if $v$ is leaf of $T$.

Recall the definition of the isomorphic rooted trees.

\begin{definition}\label{d1.8_from7}
Let \(T_1 = T_1(r_1)\) and \(T_2 = T_2(r_2)\) be rooted trees. A bijection \(f \colon V (T_1) \to V (T_2)\) is an \emph{isomorphism of the rooted trees} of \(T_1\) and \(T_2\) if \(f(r_1) = r_2\) and
\begin{equation*}
(\{u, v\} \in E(T_1)) \Leftrightarrow (\{f(u), f(v)\} \in E(T_2)).
\end{equation*}
 The rooted trees \(T_1\) and \(T_2\) are isomorphic if there exists an isomorphism \(f \colon V (T_1) \to V (T_2)\).
\end{definition}

\begin{definition}\label{d1.9_from7}
Let \(T_i = T_i(r_i, l_i)\) be labeled rooted trees with the roots \(r_i\) and the labeling  \(l_i \colon V (T_i) \to \mathbb{R}^{+},\) \(i = 1, 2\). An isomorphism \(f \colon V (T_1) \to V (T_2)\) of the rooted trees \(T_1(r_1)\) and \(T_2(r_2)\) is an \emph{isomorphism of the labeled rooted trees} \(T_1(r_1, l_1)\) and \(T_2(r_2, l_2)\) if the equality
\begin{equation*}
l_2(f(v)) = l_1(v)
\end{equation*}
holds for every \(v \in V (T_1)\). The labeled rooted trees \(T_1(r_1, l_1)\) and \(T_2(r_2, l_2)\) are isomorphic if there is an isomorphism of these trees.
\end{definition}

We will say that a tree \(T\) is a \emph{star} if there is a vertex \(c \in V(T)\), the center of \(T\), such that \(c\) and \(v\) are adjacent for every \(v \in V(T) \setminus \{c\}\).

\begin{proposition}\label{p2.2}
A finite connected graph \(G\) with \(\card V(G) = n\) is a tree if and only if \(\card E(G) = n-1\).
\end{proposition}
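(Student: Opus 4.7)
The plan is to prove both implications by induction on $n = \card V(G)$, exploiting the fact that every finite tree with at least two vertices possesses a leaf.

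For the forward direction, suppose $G$ is a tree with $n$ vertices. The base case $n=1$ is immediate since $E(G) = \varnothing$ and $n-1 = 0$. For the inductive step with $n \geqslant 2$, I would first establish the leaf lemma: every finite tree with at least two vertices contains a vertex $v$ adjacent to exactly one other vertex. The standard argument takes a path $P \subseteq G$ of maximal length; if an endpoint of $P$ had another neighbor, either $P$ could be extended (contradicting maximality) or a cycle would appear (contradicting the tree hypothesis). I would then verify that deleting such a leaf $v$ together with its unique incident edge leaves a graph $G' = G - v$ which is still connected (any path in $G$ between two remaining vertices cannot pass through a leaf) and still acyclic (removing a vertex cannot create a cycle). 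Hence $G'$ is a tree on $n-1$ vertices and, by the inductive hypothesis, satisfies $\card E(G') = n-2$, giving $\card E(G) = n-1$.

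For the converse, suppose $G$ is connected with $\card V(G) = n$ and $\card E(G) = n-1$, and suppose toward a contradiction that $G$ contains a cycle. The key observation is that removing any single edge $e$ belonging to a cycle preserves connectedness: any path that used $e$ can be rerouted through the remaining arc of the cycle. Iterating this deletion procedure as long as cycles exist must terminate (each step strictly reduces the edge count), yielding a connected acyclic spanning subgraph $T^{*}$ on the same vertex set $V(G)$. Thus $T^{*}$ is a tree on $n$ vertices, and the forward direction gives $\card E(T^{*}) = n-1$. But then $\card E(T^{*}) = \card E(G)$ together with $E(T^{*}) \subseteq E(G)$ forces $E(T^{*}) = E(G)$, meaning no edges were actually deleted, contradicting the existence of the initial cycle. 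Therefore $G$ is acyclic and hence a tree.

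The argument is essentially routine; the only place requiring mild care is the leaf lemma and the parallel observation about edges in cycles, both of which are resolved by extremal-path or rerouting arguments. No deeper tool is needed beyond the inductive framework and Definition~\ref{d2.1}.
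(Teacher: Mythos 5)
Your proof is correct. The paper does not actually prove Proposition~\ref{p2.2}; it simply refers the reader to Corollary~1.5.3 of \cite{Diestel2017}, and your argument is essentially the standard textbook proof found there: induction via the leaf lemma (established by a maximal-path argument) for the forward direction, and deletion of cycle edges to extract a spanning tree, followed by an edge count, for the converse. All the delicate points --- that a leaf cannot be an internal vertex of a path (so $G-v$ stays connected), that rerouting around a deleted cycle edge yields a walk and hence a path, and that the deletion process terminates --- are handled or are routine, so nothing is missing.
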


For the proof see, for example, Corollary~1.5.3 \cite{Diestel2017}.

The next simple proposition directly follows from Definition~\ref{d2.1} and the definition of subgraphs of a graph.

\begin{proposition}\label{p2.3}
Let \(T\) be a tree and let \(G\) be a connected subgraph of \(T\). Then \(G\) is a subtree of \(T\).
\end{proposition}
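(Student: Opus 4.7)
The proof is essentially an immediate unpacking of Definition~\ref{d2.1} together with the definition of subgraph, so the plan is short. By Definition~\ref{d2.1}, a tree is a connected graph with no cycles, and a subtree of $T$ means a subgraph of $T$ that happens to be a tree. Since $G$ is already given to be a connected subgraph of $T$, the entire task reduces to verifying that $G$ contains no cycle.

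My approach is a standard proof by contradiction. Suppose $G$ contains a cycle $C$; that is, there is a subgraph $C$ of $G$ which is a cycle in the sense of the definition given in the excerpt. Because $C \subseteq G$ and $G \subseteq T$, the inclusions $V(C) \subseteq V(T)$ and $E(C) \subseteq E(T)$ both hold, so $C$ is a subgraph of $T$ and in particular a cycle inside $T$. This contradicts the hypothesis that $T$ is a tree, since by Definition~\ref{d2.1} a tree contains no cycles. Hence $G$ contains no cycle.

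Combining this with the assumed connectedness of $G$ shows that $G$ itself satisfies Definition~\ref{d2.1}, so $G$ is a tree. Together with $G \subseteq T$, this means $G$ is a subtree of $T$, as required.

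There is essentially no obstacle here: the only subtlety worth noting is the mild bookkeeping that ``$C$ is a cycle inside $G$'' automatically promotes to ``$C$ is a cycle inside $T$'' via transitivity of the subgraph relation, which is immediate from the componentwise set inclusions defining $\subseteq$ for graphs. No additional machinery, induction on $\card V(T)$, or appeal to Proposition~\ref{p2.2} is needed.
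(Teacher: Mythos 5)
Your proof is correct and matches the paper exactly in spirit: the paper gives no written proof, stating only that the proposition ``directly follows from Definition~\ref{d2.1} and the definition of subgraphs of a graph,'' and your argument is precisely that unpacking (connectedness is assumed, and any cycle in \(G\) would be a cycle in \(T\) by transitivity of the subgraph relation). Nothing further is needed.
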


Let \(\{G_i \colon i \in I\}\) be a nonempty family of graphs such that
\[
\left(\bigcup_{i \in I} V(G_i)\right) \cap \left(\bigcup_{i \in I} E(G_i)\right) = \varnothing.
\]
Then the union \(\bigcup_{i \in I} G_i\) is a graph \(H\) with
\[
V(H) = \bigcup_{i \in I} V(G_i), \quad E(H) = \bigcup_{i \in I} E(G_i).
\]

The definition of connectedness of graphs implies the following.

\begin{proposition}\label{p2.4}
Let \(\{G_i \colon i \in I\}\) be a nonvoid family of connected subgraphs of a graph \(G\). If the set \(\bigcap_{i \in I} V(G_i)\) is nonempty, then \(\bigcup_{i \in I} G_i\) is a connected subgraph of \(G\).
\end{proposition}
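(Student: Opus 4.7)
The plan is to fix a common vertex and use it as a hub through which any two vertices of the union can be joined.

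First I would set $H := \bigcup_{i \in I} G_i$. The fact that $H$ is a subgraph of $G$ is immediate from the definition of the union of graphs, since $V(G_i) \subseteq V(G)$ and $E(G_i) \subseteq E(G)$ for every $i \in I$. So the whole content of the proposition is the connectedness of $H$.

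Next I would pick any vertex $v_0 \in \bigcap_{i \in I} V(G_i)$, which exists by hypothesis. To prove $H$ is connected, I would take two arbitrary distinct vertices $u, w \in V(H)$ and exhibit a path in $H$ joining them. By definition of $V(H)$ there exist indices $i_1, i_2 \in I$ with $u \in V(G_{i_1})$ and $w \in V(G_{i_2})$. Since $v_0 \in V(G_{i_1})$ and $G_{i_1}$ is connected, either $u = v_0$ or there is a path $P_1 \subseteq G_{i_1}$ joining $u$ and $v_0$; similarly there is (possibly trivial) path $P_2 \subseteq G_{i_2}$ joining $v_0$ and $w$. Both $P_1$ and $P_2$ lie in $H$ since $G_{i_1}, G_{i_2} \subseteq H$.

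The only mild subtlety is that the concatenation of $P_1$ and $P_2$ is a walk joining $u$ and $w$ in $H$ but is not necessarily a path (the two paths may share vertices other than $v_0$). I would handle this in the standard way: from any walk between two distinct vertices of a graph one can extract a path by repeatedly deleting cycles formed whenever a vertex is revisited, so the existence of the walk yields a path in $H$ from $u$ to $w$. This is the step I expect to be the only non-formal point, and it is entirely routine.
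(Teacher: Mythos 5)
Your argument is correct and is precisely the standard ``hub'' argument the authors have in mind: the paper states this proposition without proof, asserting that it follows directly from the definition of connectedness, and your route through a common vertex $v_0$ together with the routine extraction of a path from the concatenated walk is exactly the intended justification. No gaps.
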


In the proof of Theorem~\ref{t4.5} we will use also the next simple fact.

\begin{proposition}\label{p2.5}
Let \( T_1, \, T_2, \, T_3 \ldots\) be a sequence of trees satisfying the inclusion
\begin{equation}\label{e2.5_1}
T_i \subseteq T_{i+1}
\end{equation}
for every integer $i \geqslant 1$. Then the graph
\begin{equation}\label{e2.5_2}
T \colon= \bigcup\limits_{i=1}^{\infty} T_i
\end{equation}
is a tree.

\begin{proof}
Indeed, $T$ is a connected graph by Proposition~\ref{p2.4}. Suppose that we can find a cycle $C \subseteq T$. Since $C$ is a finite graph, inclusion \eqref{e2.5_1} and equality \eqref{e2.5_2} imply that there is an integer $i_0 \geqslant 1 $ such that
$$
T_{i_0} \supseteq C.
$$
The last inclusion is impossible, since $T_{i_0}$ is a tree. Thus $T$ also is a tree.
\end{proof}

\end{proposition}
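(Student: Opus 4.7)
The plan is to verify the two defining properties of a tree for $T$: connectedness and absence of cycles. Both properties follow in a fairly direct way from the tools set up earlier in the section, together with the finiteness of cycles.

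For connectedness, I would invoke Proposition~\ref{p2.4} applied to the family $\{T_i \colon i \geqslant 1\}$. Each $T_i$ is a tree, hence a connected subgraph of $T$. Since the chain $T_i \subseteq T_{i+1}$ forces $V(T_1) \subseteq V(T_i)$ for every $i$, and $V(T_1)$ is nonempty (a tree has at least one vertex), the intersection $\bigcap_{i \geqslant 1} V(T_i)$ contains $V(T_1)$ and is therefore nonempty. Proposition~\ref{p2.4} then yields that $T = \bigcup_{i \geqslant 1} T_i$ is connected.

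For the acyclic part, I would argue by contradiction. Suppose $C \subseteq T$ is a cycle. Then $V(C)$ and $E(C)$ are both finite sets. Each edge $e \in E(C)$ lies in some $E(T_{j(e)})$ by definition of the union, and the finitely many indices $j(e)$ admit a maximum $i_0$. The monotonicity $T_i \subseteq T_{i+1}$ then guarantees $E(C) \subseteq E(T_{i_0})$, and the incident vertices are also in $V(T_{i_0})$, so $C \subseteq T_{i_0}$. This contradicts the fact that $T_{i_0}$ is a tree and therefore contains no cycle, so $T$ must be acyclic and hence a tree.

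The only real subtlety is making sure one keeps track of the fact that the union is being taken inside a fixed ambient sense (so that edges and vertices glue consistently), but since the $T_i$ form an increasing chain this is automatic; no genuine obstacle arises. The crucial use of hypothesis \eqref{e2.5_1} is exactly the step that lets us replace the finite collection of indices $\{j(e) : e \in E(C)\}$ by their maximum to locate the entire cycle inside a single $T_{i_0}$.
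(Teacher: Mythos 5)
Your proposal is correct and follows essentially the same route as the paper's proof: connectedness via Proposition~\ref{p2.4} (using that the nested chain makes the vertex sets have nonempty intersection), and acyclicity by locating any putative finite cycle inside a single $T_{i_0}$ via the monotonicity of the chain, contradicting that $T_{i_0}$ is a tree. You merely spell out the details (the maximum over the indices $j(e)$) that the paper leaves implicit.
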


\begin{definition}\label{d2.5}
Let \(G\) be a graph and let \(k \geqslant 2\) be a cardinal number. The graph \(G\) is \emph{complete \(k\)-partite} if the vertex set \(V(G)\) can be partitioned into \(k\) nonempty, disjoint subsets, or \emph{parts}, in such a way that no edge has both ends in the same part and any two vertices in different parts are adjacent.
\end{definition}

We shall say that $G$ is a \emph{complete multipartite graph} if there is a cardinal number \(k\) such that $G$ is complete $k$-partite.

\begin{lemma}\label{l2.6}
Let \(G\) be a complete multipartite graph. Then the following conditions are equivalent:
\begin{enumerate}
\item\label{l2.6:s1} There is a star \(S \subseteq G\) such that \(V(S) = V(G)\).
\item\label{l2.6:s2} At least one part of \(G\) contains exactly one point.
\end{enumerate}
\end{lemma}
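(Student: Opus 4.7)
The plan is a direct two-direction argument that hinges on the defining property of complete multipartite graphs: two vertices are adjacent if and only if they lie in different parts.

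For the implication \(\eqref{l2.6:s1} \Rightarrow \eqref{l2.6:s2}\), I would pick a star \(S \subseteq G\) with \(V(S) = V(G)\) and let \(c\) be its center. Then by definition of a star, the vertex \(c\) is adjacent in \(S\) (and therefore in \(G\), because \(S \subseteq G\)) to every other vertex of \(G\). Let \(A\) be the part of \(G\) containing \(c\). Since vertices in the same part of a complete multipartite graph are not adjacent, \(A\) cannot contain any vertex other than \(c\), i.e., \(A = \{c\}\). This yields statement \eqref{l2.6:s2}.

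For the reverse implication \(\eqref{l2.6:s2} \Rightarrow \eqref{l2.6:s1}\), I would start from a part \(A\) with \(\card A = 1\), write \(A = \{c\}\), and observe that by the definition of a complete multipartite graph, \(c\) is adjacent in \(G\) to every \(v \in V(G) \setminus \{c\}\). Consequently, the graph \(S\) with
\[
V(S) = V(G), \qquad E(S) = \bigl\{\{c, v\} \colon v \in V(G) \setminus \{c\}\bigr\}
\]
is a subgraph of \(G\), and it is a star with center \(c\) satisfying \(V(S) = V(G)\).

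There is essentially no serious obstacle here: both directions reduce immediately to the fact that in a complete multipartite graph the adjacency between two vertices is equivalent to their belonging to different parts. The only point that needs mild care is handling the degenerate cases (for instance, when \(V(G) = \{c\}\), the star \(S\) coincides with the single-vertex graph, which is still a subgraph of \(G\) with \(V(S) = V(G)\)).
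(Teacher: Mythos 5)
Your proof is correct and follows essentially the same route as the paper's: both directions reduce to the fact that adjacency in a complete multipartite graph is equivalent to lying in different parts (the paper phrases the forward direction as a small contradiction argument, you phrase it directly, but the content is identical). Note only that your degenerate case \(V(G)=\{c\}\) cannot actually occur, since Definition~\ref{d2.5} requires at least two nonempty parts.
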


\begin{proof}
\(\ref{l2.6:s1} \Rightarrow \ref{l2.6:s2}\). Let \(S \subseteq G\) be a star with the center \(c\) and let \(V(S) = V(G)\). Then there is a part \(A\) of \(G\) such that \(c \in A\). If \(u\) is a point of \(A\) and \(u \neq c\), then, by Definition~\ref{d2.5}, the points \(u\) and \(c\) are nonadjacent in \(G\). Now \(S \subseteq G\) implies that these points are also nonadjacent in \(S\), contrary to the definition of stars. Thus, the part \(A\) contains the point \(c\) only.

\(\ref{l2.6:s2} \Rightarrow \ref{l2.6:s1}\). Let \(A\) be a part of \(G\) and let \(\card A = 1\) hold. Write \(c\) for the unique point of \(A\) and consider the star \(S\) with the center \(c\) and \(V(S) = V(G)\). Then \(S \subseteq G\) follows from Definition~\ref{d2.5}.
\end{proof}

\section{Preliminaries. Balls and centered spheres in ultrametric spaces}

Let \((X, \rho)\) be a metric space and let \(A\) be a subset of \(X\). Recall that the \emph{diameter} of \(A\) is the quantity
\begin{equation}\label{e3.1}
\diam A = \sup\{\rho(x, y)\colon x, y \in A\}.
\end{equation}

\begin{definition}\label{d3.1}
If \((X, \rho)\) is a metric space with \(\card X \geqslant 2\), then the \emph{diametrical graph} of \((X, \rho)\) is a graph \(G = G_{X, \rho}\) such that \(V(G) = X\) holds and
\begin{equation*}
\bigl(\{u, v\} \in E(G)\bigr) \Leftrightarrow \bigl(\rho(u, v) = \diam X\bigr)
\end{equation*}
is valid for all \(u\), \(v \in V(G)\).
\end{definition}

The following theorem directly follows from Theorem~3.1 \cite{DDP2011pNUAA}.

\begin{theorem}\label{t3.1}
Let \((X, \rho)\) be an ultrametric space with \(\card X \geqslant 2\). If the diametrical graph \(G_{X, \rho}\) is nonempty, then it is a complete multipartite graph.
\end{theorem}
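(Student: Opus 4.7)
The plan is to identify the parts of the multipartite structure with the equivalence classes of a suitable relation on $X$, and verify that the strong triangle inequality is exactly what makes the relation transitive.

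Set $D := \diam X$, which is well-defined and strictly positive because $G_{X,\rho}$ is nonempty. I would then define a relation $\sim$ on $X$ by
\[
u \sim v \quad \Longleftrightarrow \quad (u = v) \text{ or } \rho(u,v) < D.
\]
Reflexivity and symmetry are immediate from the definition. For transitivity, I take distinct $u, v, w$ with $u \sim v$ and $v \sim w$, so $\rho(u,v) < D$ and $\rho(v,w) < D$; the strong triangle inequality then gives
\[
\rho(u,w) \leqslant \max\{\rho(u,v),\rho(v,w)\} < D,
\]
so $u \sim w$. Thus $\sim$ is an equivalence relation on $X$.

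Next, I would let $\{A_i : i \in I\}$ be the family of equivalence classes of $\sim$ and argue that this is the desired partition. Since $G_{X,\rho}$ has at least one edge, there exist $u, v \in X$ with $\rho(u,v) = D$, hence $u \not\sim v$, so $\card I \geqslant 2$. If two distinct points $u, v$ lie in the same class $A_i$, then by definition of $\sim$ we have $\rho(u,v) < D$, so $\{u,v\} \notin E(G_{X,\rho})$. Conversely, if $u \in A_i$, $v \in A_j$ with $i \neq j$, then $u \not\sim v$ forces $\rho(u,v) \geqslant D$; combined with the trivial bound $\rho(u,v) \leqslant \diam X = D$, this yields $\rho(u,v) = D$, so $\{u,v\} \in E(G_{X,\rho})$. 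By Definition~\ref{d2.5}, $G_{X,\rho}$ is complete $k$-partite with $k = \card I \geqslant 2$.

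There is no real obstacle here: the only nontrivial observation is that the strong triangle inequality transforms the strict inequality $\rho < D$ into a transitive relation, which is precisely the ultrametric feature that fails in general metric spaces. Everything else is bookkeeping about equivalence classes and the definition of a complete multipartite graph.
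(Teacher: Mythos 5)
Your proof is correct and complete: the relation $u \sim v \Leftrightarrow (u=v \text{ or } \rho(u,v) < \diam X)$ is indeed an equivalence relation precisely because of the strong triangle inequality, and the verification that its classes form the parts of a complete multipartite structure is carried out without gaps (including the needed observations that $\diam X$ is attained and positive because $G_{X,\rho}$ is nonempty). The paper itself gives no proof of this statement, deferring instead to Theorem~3.1 of \cite{DDP2011pNUAA}; your argument is the standard one underlying that reference, so it is essentially the expected proof, just written out in full.
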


The next lemma shows that the radius of any centered ultrametric sphere is equal to its diameter.

\begin{lemma}\label{l3.1}
Let \(C\) be a centered sphere in an ultrametric space \((X, d)\) and let \(\card C \geqslant 2\). If \(c \in C\) and \(r \in \RR^{+}\) satisfy the condition
\begin{equation}\label{l3.1:e1}
C = \{x \in X \colon d(x, c) = r\} \cup \{c\},
\end{equation}
then the equality
\begin{equation}\label{l3.1:e2}
r = \diam C
\end{equation}
holds.
\end{lemma}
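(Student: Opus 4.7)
The plan is to establish the two inequalities $r \leqslant \diam C$ and $\diam C \leqslant r$ separately, using only the definition of $C$ in \eqref{l3.1:e1} together with the strong triangle inequality.

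First I would observe that the hypothesis $\card C \geqslant 2$ forces the existence of some $x \in C$ with $x \neq c$, which by \eqref{l3.1:e1} satisfies $d(x,c) = r$; in particular this gives $r > 0$ (otherwise the set in \eqref{l3.1:e1} would reduce to $\{c\}$) and also the lower bound $r = d(x,c) \leqslant \diam C$ via \eqref{e3.1}.

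For the reverse inequality I would take two arbitrary points $x, y \in C$ and bound $d(x,y)$ by $r$, splitting into cases. If $x = y$, then $d(x,y) = 0 \leqslant r$. If exactly one of $x, y$ equals $c$, say $y = c$, then $x \neq c$ forces $d(x,y) = d(x,c) = r$ by \eqref{l3.1:e1}. In the remaining case, both $x$ and $y$ differ from $c$, so $d(x,c) = d(y,c) = r$, and the strong triangle inequality gives
\[
d(x,y) \leqslant \max\{d(x,c), d(c,y)\} = r.
\]
Taking the supremum over $x, y \in C$ yields $\diam C \leqslant r$, which combined with the lower bound gives \eqref{l3.1:e2}.

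There is no real obstacle here: the argument is a direct case analysis and the only nontrivial ingredient is the strong triangle inequality applied to two points lying on a common sphere around $c$. The main thing to be careful about is merely to note that the set in \eqref{l3.1:e1} might a priori be empty if $r$ were chosen badly, but the hypothesis $\card C \geqslant 2$ rules this out and simultaneously produces a witness realising the distance $r$.
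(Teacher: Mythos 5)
Your argument is correct and follows essentially the same route as the paper: the lower bound $r \leqslant \diam C$ comes from a witness $x \in C$ with $d(x,c) = r$ guaranteed by $\card C \geqslant 2$, and the upper bound comes from $d(u,v) \leqslant \max\{d(u,c),d(v,c)\} \leqslant r$ via the strong triangle inequality; your explicit case split just spells out what the paper compresses into that single chain of inequalities.
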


\begin{proof}
The inequality \(\card C \geqslant 2\) implies that there is a point \(x \in C\) such that \(d(x, c) = r\). Consequently,
\begin{equation}\label{l3.1:e3}
r \leqslant \diam C
\end{equation}
holds. Now using \eqref{l3.1:e1} and the strong triangle inequality we obtain
\begin{equation}\label{l3.1:e4}
d(u, v) \leqslant \max \{d(u, c), d(v, c)\} \leqslant r
\end{equation}
for all \(u\), \(v \in C\). Equality \eqref{l3.1:e2} follows from \eqref{l3.1:e3} and \eqref{l3.1:e4}.
\end{proof}

Lemma~\ref{l2.6}, Lemma~\ref{l3.1} and Theorem~\ref{t3.1} give us the following.

\begin{corollary}\label{c3.3}
Let \((Y, \rho)\) be an ultrametric space with nonempty diametrical graph \(G_{Y, \rho}\). Then the following statements are equivalent:
\begin{enumerate}
\item \label{c3.3:s1} \(Y \in \mathbf{Cs}_{Y, \rho}\).
\item \label{c3.3:s2} At least one part of the complete multipartite graph \(G_{Y, \rho}\) contains exactly one point.
\item \label{c3.3:s3} There is a star \(S \subseteq G_{Y, \rho}\) such that \(V(S) = V(G_{Y, \rho})\).
\end{enumerate}
\end{corollary}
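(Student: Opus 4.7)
The plan is to derive the corollary as a direct consequence of the three cited results, by proving the cycle of implications $\ref{c3.3:s2} \Leftrightarrow \ref{c3.3:s3}$ via Lemma~\ref{l2.6} and then $\ref{c3.3:s1} \Leftrightarrow \ref{c3.3:s3}$ by unwinding definitions with the help of Lemma~\ref{l3.1}.

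First I would invoke Theorem~\ref{t3.1}: since $G_{Y,\rho}$ is nonempty, it is a complete multipartite graph. Consequently, Lemma~\ref{l2.6} applies to $G_{Y,\rho}$ and yields $\ref{c3.3:s2} \Leftrightarrow \ref{c3.3:s3}$ without further work.

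Next I would establish $\ref{c3.3:s1} \Leftrightarrow \ref{c3.3:s3}$. For $\ref{c3.3:s1} \Rightarrow \ref{c3.3:s3}$, suppose $Y \in \mathbf{Cs}_{Y, \rho}$, so there exist $c \in Y$ and $r \in \RR^+$ with $Y = \{x \in Y \colon \rho(x,c) = r\} \cup \{c\}$. The nonemptiness of $G_{Y,\rho}$ forces $\card Y \geqslant 2$, so Lemma~\ref{l3.1} gives $r = \diam Y$. Then every $x \in Y \setminus \{c\}$ satisfies $\rho(x, c) = \diam Y$, meaning $\{x, c\} \in E(G_{Y, \rho})$; hence the star $S$ centered at $c$ with $V(S) = Y = V(G_{Y, \rho})$ is a subgraph of $G_{Y,\rho}$. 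Conversely, for $\ref{c3.3:s3} \Rightarrow \ref{c3.3:s1}$, if a star $S \subseteq G_{Y,\rho}$ with center $c$ satisfies $V(S) = V(G_{Y, \rho})$, then $\{c, x\} \in E(G_{Y, \rho})$ for every $x \in Y \setminus \{c\}$, which translates into $\rho(c, x) = \diam Y$ for every such $x$; therefore
\[
Y = \{x \in Y \colon \rho(x, c) = \diam Y\} \cup \{c\},
\]
witnessing $Y \in \mathbf{Cs}_{Y, \rho}$.

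Nothing here is truly difficult, but the only point that requires attention is the implicit verification that $\card Y \geqslant 2$ (forced by the nonemptiness of $G_{Y,\rho}$) so that Lemma~\ref{l3.1} legitimately identifies the radius of the centered sphere with $\diam Y$. Once this observation is in place, the chain $\ref{c3.3:s1} \Leftrightarrow \ref{c3.3:s3} \Leftrightarrow \ref{c3.3:s2}$ closes the proof.
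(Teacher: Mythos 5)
Your proposal is correct and follows exactly the route the paper intends: the paper gives no written proof, merely stating that Lemma~\ref{l2.6}, Lemma~\ref{l3.1} and Theorem~\ref{t3.1} yield the corollary, and your argument is precisely the natural unwinding of those three citations (Theorem~\ref{t3.1} for complete multipartiteness, Lemma~\ref{l2.6} for $\ref{c3.3:s2} \Leftrightarrow \ref{c3.3:s3}$, and Lemma~\ref{l3.1} to identify the radius with $\diam Y$ in $\ref{c3.3:s1} \Leftrightarrow \ref{c3.3:s3}$). Your explicit check that the nonemptiness of $G_{Y,\rho}$ forces $\card Y \geqslant 2$ is a worthwhile detail the paper leaves implicit.
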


The next result is a special case of Proposition~3.3 \cite{BDK2022TaAoG}.

\begin{lemma}\label{l3.3}
Let \((X, \rho)\) be a metric space with \(\card X \geqslant 2\). If the diametrical graph \(G_{X, \rho}\) is complete multipartite, then every part of \(G_{X, \rho}\) is an open ball with a center \(c \in X\) and the radius \(r = \diam X\) and, conversely, every \(B_r(c) \in \mathbf{B}_X\) with \(r = \diam X\) is a part of \(G_{X, \rho}\).
\end{lemma}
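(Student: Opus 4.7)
The plan is to set $r = \diam X$ and prove the two set equalities by exploiting the defining feature of complete multipartite graphs, namely that two distinct vertices are nonadjacent if and only if they lie in the same part.

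For the first assertion, I would pick an arbitrary part $A$ of $G_{X,\rho}$ and any center candidate $c \in A$ (a part is nonempty by definition), and show that $A = B_r(c)$. To prove $A \subseteq B_r(c)$, take $x \in A$ with $x \neq c$; since $c$ and $x$ lie in the same part, they are nonadjacent in $G_{X,\rho}$, so $\rho(c,x) \neq r$, and combined with $\rho(c,x) \leqslant \diam X = r$ this gives $\rho(c,x) < r$, hence $x \in B_r(c)$; of course $c \in B_r(c)$. For the reverse inclusion $B_r(c) \subseteq A$, take $x \in B_r(c)$ with $x \neq c$; then $\rho(c,x) < r$ forces $\{c,x\} \notin E(G_{X,\rho})$, and by the characterization of complete multipartite graphs the two nonadjacent distinct vertices $c$ and $x$ must share a part, so $x \in A$.

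For the converse assertion, suppose $B_r(c) \in \mathbf{B}_X$ with $r = \diam X$. Let $A$ be the unique part of $G_{X,\rho}$ containing $c$. Applying the first half of the lemma to $A$ with the chosen $c \in A$, we obtain $A = B_r(c)$, so $B_r(c)$ is indeed a part of $G_{X,\rho}$.

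There is no real obstacle here; the only care needed is to use the ``nonadjacent iff same part'' dichotomy in both directions (it furnishes both inclusions), and to observe that the center $c$ used in the first half can be chosen freely from the part $A$, which is precisely what makes the converse immediate.
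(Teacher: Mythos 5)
Your proof is correct. Note, though, that the paper does not actually prove Lemma~\ref{l3.3}: it simply states that the result is a special case of Proposition~3.3 of \cite{BDK2022TaAoG} and moves on. So your argument is not a variant of the paper's proof but a self-contained replacement for an external citation, and it is a clean one: both inclusions $A\subseteq B_r(c)$ and $B_r(c)\subseteq A$ come from the two directions of the dichotomy ``nonadjacent distinct vertices lie in the same part, vertices in different parts are adjacent,'' combined with $\rho(c,x)\leqslant\diam X$, and the converse assertion is then immediate because the center $c$ in the first half ranges over all of $A$. The only point you pass over silently is that $c\in B_r(c)$ requires $r=\diam X>0$; this holds because $\card X\geqslant 2$ forces $\diam X>0$ (indeed, the hypothesis that $G_{X,\rho}$ is complete multipartite with at least two nonempty parts already guarantees an edge, hence a pair at distance $\diam X>0$). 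This is a trivial remark, but worth a clause in a written-out proof. Your elementary argument buys independence from \cite{BDK2022TaAoG} at essentially no cost in length.
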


Using the last lemma we obtain a refinement of Theorem~\ref{t3.1}.

\begin{theorem}\label{t3.3}
Let \((X, \rho)\) be an ultrametric space with \(\card X \geqslant 2\). If the diametrical graph \(G_{X, \rho}\) is nonempty, then \(G_{X, \rho}\) is complete multipartite and, moreover, the set of all parts of \(G_{X, \rho}\) is the same as the set of all open balls of radius \(r = \diam X\).
\end{theorem}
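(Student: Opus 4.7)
The plan is to recognize this theorem as essentially a packaging of Theorem~\ref{t3.1} and Lemma~\ref{l3.3}, with no substantive work beyond invoking them in the right order. The first assertion, that $G_{X,\rho}$ is complete multipartite, is exactly what Theorem~\ref{t3.1} provides once we know the diametrical graph is nonempty, so I would simply quote that theorem to dispatch this half.

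For the identification of parts with open balls of radius $r = \diam X$, I would appeal to Lemma~\ref{l3.3}. Having just established that $G_{X,\rho}$ is complete multipartite, the hypothesis of that lemma is met. The lemma then yields both inclusions of the desired set equality: every part of $G_{X,\rho}$ is an open ball with radius $\diam X$, and conversely every $B_r(c) \in \mathbf{B}_X$ with $r = \diam X$ is a part of $G_{X,\rho}$. Combining these two directions gives the claimed coincidence of the set of parts with the set of open balls of radius $\diam X$.

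I do not anticipate any genuine obstacle, since the heavy lifting is already carried by Theorem~\ref{t3.1} and Lemma~\ref{l3.3}. The only point worth a brief mention is that the hypothesis ``$G_{X,\rho}$ nonempty'' implicitly guarantees that $\diam X$ is realized by at least one pair of distinct points, so that the value $r = \diam X$ is attained and open balls of exactly this radius are meaningful objects in the stated equality.
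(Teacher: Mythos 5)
Your proposal matches the paper exactly: the authors present Theorem~\ref{t3.3} as an immediate consequence of Theorem~\ref{t3.1} (for complete multipartiteness) and Lemma~\ref{l3.3} (for identifying the parts with the open balls of radius \(\diam X\)), which is precisely your argument. No gap here; the proof is correct and essentially identical to the paper's.
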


The following proposition claims that every point of an arbitrary ultrametric ball is a center of that ball.

\begin{proposition}\label{p3.4}
Let \((X, d)\) be an ultrametric space. Then for every ball \(B_r(c)\) and every \(a \in B_r(c)\) we have \(B_r(c) = B_r(a)\).
\end{proposition}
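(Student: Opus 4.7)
The plan is to establish the two inclusions $B_r(c) \subseteq B_r(a)$ and $B_r(a) \subseteq B_r(c)$ by direct application of the strong triangle inequality, noting that the situation is symmetric in $a$ and $c$ once we verify that $c \in B_r(a)$.

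First I would fix $a \in B_r(c)$ and observe that by definition this means $d(a,c) < r$. Since $d$ is symmetric, $d(c,a) < r$ as well, so $c \in B_r(a)$; thus the hypothesis is symmetric in $a$ and $c$, and it suffices to prove one inclusion, say $B_r(c) \subseteq B_r(a)$. For this, I would take an arbitrary $x \in B_r(c)$ and estimate
\[
d(x,a) \leqslant \max\{d(x,c),\, d(c,a)\} < r
\]
using the strong triangle inequality together with $d(x,c) < r$ and $d(c,a) < r$. This yields $x \in B_r(a)$, hence $B_r(c) \subseteq B_r(a)$.

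Applying the same argument with the roles of $a$ and $c$ interchanged (which is legitimate because $c \in B_r(a)$) gives $B_r(a) \subseteq B_r(c)$, and the two inclusions together yield the claimed equality.

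There is no real obstacle here: the statement is the standard ``every point of an ultrametric ball is a center'' fact, and the only ingredient beyond the definitions is the strong triangle inequality. The proof is essentially a one-line computation repeated symmetrically.
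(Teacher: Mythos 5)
Your proof is correct: the strong triangle inequality gives $d(x,a)\leqslant\max\{d(x,c),d(c,a)\}<r$ for $x\in B_r(c)$, and the symmetry observation $c\in B_r(a)$ legitimately yields the reverse inclusion. The paper itself omits the proof, deferring to Proposition~18.4 of \cite{Sch1985}; your argument is exactly the standard one that reference contains, so there is nothing to add.
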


It directly follows from Proposition~18.4~\cite{Sch1985}, so we omit the proof here.

\begin{corollary}\label{c2.4}
Let \((X, d)\) be an ultrametric space. Then the inclusion
\begin{equation*}
\BB_{B, d|_{B \times B}} \subseteq \BB_{X, d}
\end{equation*}
holds for every \(B \in \BB_{X}\)
\end{corollary}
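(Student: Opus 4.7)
The plan is to take an arbitrary open ball \(B' \in \mathbf{B}_{B, d|_{B\times B}}\), write it as \(B' = \{x \in B : d(x,a) < s\}\) for some center \(a \in B\) and radius \(s > 0\), and exhibit it as an open ball of the ambient space \((X,d)\). Since \(B \in \mathbf{B}_X\), I may fix \(R > 0\) and \(c \in X\) with \(B = B_R(c)\); by Proposition~\ref{p3.4}, the equality \(B = B_R(a)\) also holds because \(a \in B\).

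I would then split into two cases according to the relative size of \(s\) and \(R\). If \(s \geqslant R\), then every \(x \in B = B_R(a)\) satisfies \(d(x,a) < R \leqslant s\), so \(B' = B\), which already belongs to \(\mathbf{B}_X\). If \(s < R\), I would show that the ambient ball \(B_s(a) \subseteq X\) coincides with \(B'\): indeed, any \(x \in X\) with \(d(x,a) < s\) automatically satisfies \(d(x,a) < R\), hence lies in \(B_R(a) = B\), which forces \(B_s(a) \subseteq B\) and therefore \(B' = B_s(a) \in \mathbf{B}_X\).

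I do not expect a serious obstacle here; the only subtlety is that the notion of ``center'' of an ultrametric ball is not intrinsic, so one must be willing to re-center the given ball at \(a\) before comparing radii, and this is exactly what Proposition~\ref{p3.4} provides. Once the re-centering is in place, the strong triangle inequality is not even needed beyond what is already encoded in Proposition~\ref{p3.4}, and the two-case analysis closes the argument.
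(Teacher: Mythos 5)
Your argument is correct and is essentially the paper's intended proof: the paper derives this corollary directly from Proposition~\ref{p3.4} without spelling out the details, and your two-case re-centering argument is exactly that derivation made explicit. Nothing further is needed.
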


As in the case of Corollary~\ref{c2.4}, Proposition~\ref{p3.4} implies the following.

\begin{corollary}\label{c3.10}
Let \((X, d)\) be an ultrametric space and let \(B \in \mathbf{B}_{X, d}\). Then the inclusion
\begin{equation*}
\mathbf{Cs}_{B, d|_{B \times B}} \subseteq \mathbf{Cs}_{X, d}
\end{equation*}
holds.
\end{corollary}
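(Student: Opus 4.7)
Given $C \in \mathbf{Cs}_{B, d|_{B \times B}}$ with witnessing center $c \in C$ and radius $r \in \RR^{+}$, my plan is to exhibit $C$ as a centered sphere of $(X, d)$ realized by the \emph{same} center $c$ (and, apart from one degenerate case, the same radius $r$). The key enabling fact is Proposition~\ref{p3.4}, which will allow me to recenter the ambient ball $B$ at the distinguished point $c$.

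First I would dispose of the degenerate case $C = \{c\}$: taking radius $0 \in \RR^{+}$, one has $\{x \in X : d(x,c) = 0\} \cup \{c\} = \{c\} = C$, so $C \in \mathbf{Cs}_{X,d}$ trivially.

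So assume $\card C \geqslant 2$. Then Lemma~\ref{l3.1}, applied inside the subspace $(B, d|_{B \times B})$, yields $r = \diam C$, and there exists some $y \in C \setminus \{c\}$ with $d(y,c) = r$. Since $c \in B$ and $B$ is an open ball of $(X,d)$, Proposition~\ref{p3.4} rewrites $B$ as $B = \{z \in X : d(z,c) < s\}$ for some $s > 0$. The point $y$ lies in $B$, so $r = d(y,c) < s$. Hence any $x \in X$ with $d(x,c) = r$ automatically satisfies $d(x,c) < s$ and therefore belongs to $B$, which gives $\{x \in X : d(x,c) = r\} = \{x \in B : d(x,c) = r\}$. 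The centered-sphere presentation of $C$ inside $B$ thus lifts verbatim to a centered-sphere presentation inside $X$, establishing $C \in \mathbf{Cs}_{X,d}$.

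The main obstacle is conceptual rather than computational: one must recognize that the witness $c$ for $C$ inside $B$ is simultaneously a legitimate \emph{center} for the whole ball $B$, which is exactly the content of Proposition~\ref{p3.4}. Without that recentering step, nothing would prevent some point of $X \setminus B$ from lying at distance $r$ from $c$, and the lift from $\mathbf{Cs}_{B}$ to $\mathbf{Cs}_{X}$ would fail.
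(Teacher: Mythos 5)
Your proof is correct and follows essentially the route the paper intends: the paper omits the details, stating only that Corollary~\ref{c3.10} follows from Proposition~\ref{p3.4} in the same way as Corollary~\ref{c2.4}, and your argument is exactly that recentering of $B$ at $c$ (the invocation of Lemma~\ref{l3.1} to get $r = \diam C$ is not actually needed; you only use the existence of a point of $C$ at distance $r$ from $c$, which follows directly from $\card C \geqslant 2$).
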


The following proposition describes some useful properties of locally finite ultrametric spaces.

\begin{proposition}\label{c3.11}
Let \((X, d)\) be a locally finite ultrametric space, $c \in X$, and let \( \mathbf{B}^c_{X, d} \) be the set of all open balls containing the point $c$,
\begin{equation*}
\mathbf{B}^c_{X,d} = \{ B\in \mathbf{B}_{X,d} \, \colon c\in B \}.
\end{equation*}
The following statements hold:
\begin{enumerate}
\item \label{c3.11:s1} The mapping
\begin{equation}\label{c3.11:e1}
\mathbf{B}^c_{X,d}\ni B \mapsto \diam B \in \mathbb{R}^{+}
\end{equation}
is injective.
\item \label{c3.11:s2} If $X$ is infinite, then there is a sequence $( B_1, \, B_2, \, \ldots, B_n, \ldots )$ of balls such that
\begin{equation} \label{c3.11:e2}
\mathbf{B}^c_{X,d} = \{ B_1, \, B_2, \, \ldots, B_n, \ldots \},
\end{equation}
and
\begin{equation}\label{c3.11:e3}
\lim\limits_{n\to \infty} \diam B_n = \infty ,
\end{equation}
and
\begin{equation}\label{c3.11:e4}
\diam B_n < \diam B_{n+1}
\end{equation}
for every positive integer $n$.
\end{enumerate}
\end{proposition}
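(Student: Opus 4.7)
The plan is to establish (i) via the standard nested/disjoint trichotomy for ultrametric balls, and then to deduce (ii) from local finiteness by analyzing the set of distances from the fixed point $c$.

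For (i), given distinct $B, B' \in \mathbf{B}^c_{X,d}$, both contain $c$, so the classical fact that two ultrametric balls are either disjoint or comparable under inclusion forces, say, $B \subsetneq B'$. By Proposition~\ref{p3.4} we may write $B = B_r(c)$, and since $B$ is finite the maximum $r_0 := \max_{y \in B} d(y,c)$ is attained and satisfies $r_0 < r$. This yields the closed-ball description $B = \{y \in X : d(y,c) \le r_0\}$, and the strong triangle inequality (as in the proof of Lemma~\ref{l3.1}) gives $r_0 = \diam B$. Any $x \in B' \setminus B$ therefore satisfies $d(x,c) > \diam B$, whence $\diam B' \ge d(x,c) > \diam B$, proving injectivity of the map in \eqref{c3.11:e1}.

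For (ii), consider the distance set
\[
D^c := \{d(x,c) : x \in X,\ x \ne c\}.
\]
For every $R > 0$, the intersection $D^c \cap (0, R)$ is finite, since it is contained in the image of the finite set $B_R(c) \setminus \{c\}$ under $x \mapsto d(x,c)$. Hence $D^c$ has no accumulation point in $[0,\infty)$ and is at most countable. If $D^c$ were bounded by some $M$, then $X$ would coincide with the finite ball $B_{M+1}(c)$, contradicting the hypothesis; so $D^c$ is countably infinite and unbounded, and admits an enumeration $\rho_1 < \rho_2 < \ldots$ with $\rho_n \to \infty$.

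To convert this into an enumeration of $\mathbf{B}^c_{X,d}$, note that every $B \in \mathbf{B}^c_{X,d}$ has the form $B_r(c)$ by Proposition~\ref{p3.4}, and a short case analysis on $r$ shows that the distinct balls obtained are exactly $\{c\}$ (for $0 < r \le \rho_1$) and $\{y \in X : d(y,c) \le \rho_n\}$ (for $\rho_n < r \le \rho_{n+1}$), with respective diameters $0, \rho_1, \rho_2, \ldots$ by the identification in the proof of (i). Setting $B_1 := \{c\}$ and $B_{n+1} := \{y \in X : d(y,c) \le \rho_n\}$ for $n \ge 1$ produces a sequence satisfying \eqref{c3.11:e2}--\eqref{c3.11:e4}. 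The only genuinely subtle ingredient is the closed-ball identification used in (i); the remainder is a bookkeeping argument in which local finiteness is needed both to replace suprema by attained maxima and to exclude a finite bounded distance set.
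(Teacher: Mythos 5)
Your proposal is correct and follows essentially the same route as the paper: part (i) rests on the closed-ball identification $B=\{x\in X\colon d(x,c)\leqslant \diam B\}$ made possible by local finiteness, and part (ii) enumerates the locally finite, unbounded distance set from $c$ in increasing order and matches its elements to the balls in $\mathbf{B}^c_{X,d}$ via their diameters. The only cosmetic differences are that you reach (i) through the comparability of intersecting ultrametric balls rather than directly from the closed-ball representation, and you treat the singleton $\{c\}$ as a separate case instead of including $0$ in the distance set; neither changes the substance of the argument.
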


\begin{proof}

$(i)$. Since $(X,d)$ is locally finite, every $B \in \mathbf{B}^c_{X,d}$ can be represented as
    $$
    B = \{x\in X \colon d(x,c) \leqslant \diam B\},
    $$
    that implies the injectivity of mapping \eqref{c3.11:e1}.

$(ii)$. Let $X$ be infinite. Since \( (X,d) \) is locally finite, the set
\begin{equation*}
    \{ d(c,x) \colon x \in X \} \cap [0,t]
\end{equation*}
is finite for every $t \in \mathbb{R}^{+}$. Moreover, the set
    \begin{equation*}
    D^{c}_{1} = \{ d(c,x) \colon x \in X \}
\end{equation*}
is unbounded because every bounded locally finite metric space is finite. Using the last two assertions, it is easy to check that the sets \( D^{c}_{1} \) and
$$
\mathbb{N} = \{ 1, 2, \ldots, n, \ldots\}
$$
are order-isomorphic as subsets of the order set \( (\mathbb{R}^+, \leqslant )\). Let \( \Phi : \mathbb{N} \to D^{c}_{1} \) be an order-isomorphism of $\mathbb{N}$ and $D_1^c$. Write
\begin{equation*}
    t_n \colon= \Phi (n)
\end{equation*}
for every $n \in \mathbb{N}$. Then, by definition of the order-isomorphisms we have
\begin{equation}\label{c3.11:e7}
t_n < t_{n+1}
\end{equation}
for each $n \in \mathbb{N}$. Furthermore, the limit relation
\begin{equation}\label{c3.11:e8}
\lim\limits_{n \to \infty} t_n = \infty
\end{equation}
holds, since $D_{1}^c$ is an unbounded subset of $\mathbb{R}^+$.

Let us now denote by $D^c_2$ the set $ \{ \diam B \colon B \in \mathbf{B}^c_{X,d}\} $. We claim that the equality
\begin{equation}\label{c3.11:e9}
D_1^c = D_2^c
\end{equation}
holds.

Indeed, since \( (X,d)\) is locally finite, in each $B \in \mathbf{B}^c_{X,d}$ we can find $p \in B$ satisfying  the equality
$$
d(c,p) = \diam B.
$$
Consequently, the inclusion
\begin{equation}\label{c3.11:e10}
D_1^c \subseteq D_2^c
\end{equation}
holds. Now, again using the local finiteness of $(X,d)$ for each $a \in X$ we can find $\varepsilon>0$, such that the set
$$
\{ x \in X \colon d(c,a) < d(c,x)<d(c,a) +\varepsilon \}
$$
is empty, that implies the equality
$$
\diam B_r(c)=d(c,a),
$$
whenever $r \in (d(c,a), d(c,a)+\varepsilon)$. Thus the inclusion
\begin{equation}\label{c3.11:e11}
D_2^c \supseteq D_1^c
\end{equation}
holds.  Equality\eqref{c3.11:e9} follows from \eqref{c3.11:e10} and \eqref{c3.11:e11}.

Statement $(i)$ implies that there is a bijection $F \colon D_2^c \to \mathbf{B}^c_{X,d}$ satisfying the equality
$$
\diam F(t)=t
$$
for every $t \in D_2^c$.

Let us consider now the bijective mapping
$$
\mathbb{N} \xrightarrow{\Phi} D_1^c \xrightarrow{\mathrm{Id}} D_2^c  \xrightarrow{F} \mathbf{B}^c_{X,d}
$$
where $\mathrm{Id} \colon D_1^c \to D_2^c$ is the identical mapping, and define $B_n \in \mathbf{B}^c_{X,d}$ as
the value of this mapping at point $n \in \mathbb{N}$.
Then \eqref{c3.11:e3} and \eqref{c3.11:e4} follows from \eqref{c3.11:e7} and \eqref{c3.11:e8} respectively.

\end{proof}

\section{Characterization of locally finite \(\mathbf{UGVL}\)-spaces}

First of all we note that class \(\mathbf{UGVL}\) contains all nonempty ultrametric spaces with at most \(3\) points.

\begin{proposition}\label{p4.1}
Let \((X, d)\) be a nonempty ultrametric space. If the inequality \(\card X \leqslant 3\) holds, then \((X, d) \in \mathbf{UGVL}\) and every \(B \in \mathbf{B}_X\) is a centered sphere in \((X, d)\).
\end{proposition}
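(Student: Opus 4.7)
The plan is to proceed by case analysis on $\card X \in \{1,2,3\}$, exhibiting an explicit labeled tree in each case and then verifying directly that every open ball is a centered sphere. The key geometric input is the well-known consequence of the strong triangle inequality that in any ultrametric triangle at least two of the three side lengths coincide and equal the maximum, which determines the combinatorial shape of the tree I need to build.

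If $\card X = 1$, I take $T$ to consist of a single vertex with arbitrary label (the condition in Theorem~\ref{t1.4} is vacuous), and $d_l$ is the zero function, so $(X,d)\in\mathbf{UGVL}$; the unique open ball is the singleton $X$, which is a centered sphere of radius $0$. If $\card X = 2$, say $X=\{x,y\}$ and $a=d(x,y)>0$, I take $T$ to be the single edge $\{x,y\}$ with $l(x)=l(y)=a$; then $d_l(x,y)=a$, and the open balls $\{x\}$, $\{y\}$, $X$ are centered spheres (the first two of radius $0$; the last taking either point as center and radius $a$).

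The substantive case is $\card X = 3$, where I write $X=\{x_1,x_2,x_3\}$ and, using the ultrametric property, assume without loss of generality that $d(x_1,x_2)=d(x_1,x_3)=a$ and $d(x_2,x_3)=b$ with $0<b\leqslant a$. I choose the path-tree $T$ with edges $\{x_1,x_2\}$ and $\{x_2,x_3\}$ (i.e., with $x_2$ as the middle vertex) and define
\[
l(x_1)=a,\qquad l(x_2)=0,\qquad l(x_3)=b.
\]
A direct computation using \eqref{e1.1} gives $d_l(x_1,x_2)=a$, $d_l(x_2,x_3)=b$, $d_l(x_1,x_3)=\max\{a,0,b\}=a$, matching $d$; the hypothesis of Theorem~\ref{t1.4} is satisfied since both $a$ and $b$ are strictly positive. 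Hence $(X,d)\in\mathbf{UGVL}$.

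For the statement about centered spheres, I enumerate the open balls. Singletons are centered spheres of radius $0$. When $b<a$, the only other open balls are $\{x_2,x_3\}$ (a centered sphere with center $x_2$ and radius $b$) and $X$ (a centered sphere with center $x_1$ and radius $a$, since the two points at distance $a$ from $x_1$ are $x_2$ and $x_3$). When $b=a$, the only non-singleton open ball is $X$, which is a centered sphere with any choice of center and radius $a$. The only mild subtlety is that in the non-equilateral case $X$ itself is a centered sphere only with center $x_1$, not with center $x_2$ or $x_3$, so one must pick the correct center; otherwise the verification is mechanical.
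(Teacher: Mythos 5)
Your proposal is correct and follows essentially the same route as the paper: the same labeled path on three vertices with the apex of the isosceles triangle and the base vertex as leaves (labels $a$ and $b$) and the remaining vertex in the middle labeled $0$, together with a routine check of the ball/centered-sphere condition. The only cosmetic difference is that you verify the centered-sphere property by directly enumerating the open balls, whereas the paper invokes Corollary~\ref{c3.3}; both amount to the same mechanical check in a space of at most three points.
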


\begin{proof}
If \(\card X = 1\) or \(\card X = 2\), then the proposition is trivially valid. Let us consider the case when \(\card X = 3\), \(X = \{x, y, z\}\).

Every triangle in any ultrametric space is isosceles, and its base has the length less than or equal to that of its legs. Thus, we may suppose that
\[
d(x, y) = d(y, z) = a \text{ and } d(z, x) = b
\]
with \(a \geqslant b > 0\). Let us consider now a labeled path \(P_1 = P_1(l)\) with
\[
V(P_1) = \{y, x, z\} \text{ and } E(P_1) = \{\{y, x\}, \{x, z\}\}
\]
and labeling \(l \colon V(P_1) \to \mathbb{R}^+\) such that
\[
l(y) = a, \quad l(x) = 0\quad \text{and}\quad l(z) = b
\]
(see Figure~\ref{fig1}). Then \(P_1\) is a labeled tree. A simple calculation shows that \(d = d_l\) holds, where \(d_l\) is defined by \eqref{e1.1} with \(T = P_1\). Thus, \((X, d)\) belongs to the class \(\mathbf{UGVL}\) by the definition.

Let us prove that every \(B \in \mathbf{B}_X\) is a centered sphere in \((X, d)\). The last statement holds if \(\card B = 1\), that follows from \eqref{d1.2:e1} with \(S = B\) and \(r = 0\).

If \(\card B = 2\) or \(\card B = 3\), then to see that \(B\) is a centered sphere one can use Corollary~\ref{c3.3} with \((Y, \rho) = (B, d|_{B \times B})\).
\end{proof}

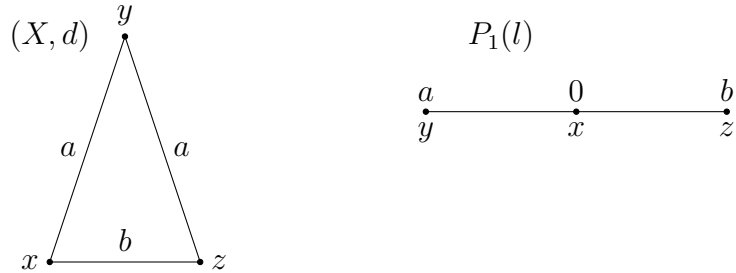
\begin{figure}[htb]
\begin{tikzpicture}
\coordinate [label=left: $x$] (x) at (0,0);
\coordinate [label=above: $y$] (y) at (1,3);
\coordinate [label=right: $z$] (z) at (2,0);
\draw (x) -- node [left] {\(a\)} (y) -- node [right] {\(a\)} (z) -- node [above] {\(b\)} (x);

\node at (0, 3) {\((X, d)\)};

\draw [fill, black] (x) circle (1pt);
\draw [fill, black] (y) circle (1pt);
\draw [fill, black] (z) circle (1pt);

\coordinate [label=above: $a$] (a) at (5,2);
\coordinate [label=above: $0$] (O) at (7,2);
\coordinate [label=above: $b$] (b) at (9,2);
\node at (6, 3) {\(P_1(l)\)};

\draw [fill, black] (a) circle (1pt);
\draw [fill, black] (O) circle (1pt);
\draw [fill, black] (b) circle (1pt);

\draw (a) node[anchor=north] {$y$} -- (O) node[anchor=north] {$x$} -- (b) node[anchor=north] {$z$};
\end{tikzpicture}
\caption{The ultrametric triangle \((X, d)\) is generated by the labeled path \(P_1(l)\).}
\label{fig1}
\end{figure}

The following example shows that \(3\) is the best possible constant in Proposition~\ref{p4.1}.

\begin{example}\label{ex4.2}
Let us consider the four-point ultrametric space \((X, d)\) depicted by Figure~\ref{fig2}. To see that there is no labeled tree for which
\begin{equation}\label{ex4.2:e1}
d_l = d
\end{equation}
holds, suppose that, for some tree \(T\) with \(V(T) = \{x, y, z, t\}\) and \(l \colon V(T) \to \mathbb{R}^{+}\), \eqref{ex4.2:e1} holds. Then, using \eqref{e1.1}, we obtain
\begin{align*}
d_l(x, z) & = 1 = \max \{l(x), l(z)\},\\
d_l(y, t) & = 1 = \max \{l(y), l(t)\}.
\end{align*}
That implies
\[
\diam X = \max \{l(x), l(y), l(z), l(t)\} = 1,
\]
contrary to \(\diam X \geqslant d(x, y) = 2\).
\end{example}

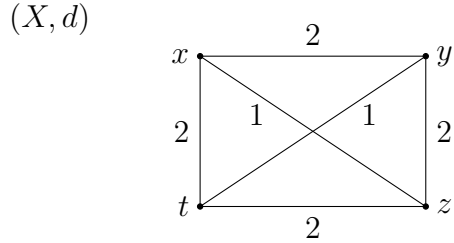
\begin{figure}[htb]
\begin{tikzpicture}
\coordinate [label=left: $x$] (x) at (0,2);
\coordinate [label=right: $y$] (y) at (3,2);
\coordinate [label=right: $z$] (z) at (3,0);
\coordinate [label=left: $t$] (t) at (0,0);
\draw (x) -- node [above] {\(2\)} (y) -- node [right] {\(2\)} (z) -- node [below] {\(2\)} (t) -- node [left] {\(2\)} (x);

\draw (x) -- node [near start, below] {\(1\)} (z);
\draw (y) -- node [near start, below] {\(1\)} (t);

\node at (-2, 2.5) {\((X, d)\)};

\draw [fill, black] (x) circle (1pt);
\draw [fill, black] (y) circle (1pt);
\draw [fill, black] (z) circle (1pt);
\draw [fill, black] (t) circle (1pt);
\end{tikzpicture}
\caption{The four-point ultrametric space \((X, d)\) does not belong to \(\mathbf{UGVL}\).}
\label{fig2}
\end{figure}

Let us show that every open ball in an \(\mathbf{UGVL}\)-space is also an \(\mathbf{UGVL}\)-space.

\begin{lemma}\label{l4.3}
Let \((X, d) \in \mathbf{UGVL}\) and let \(T(l)\) be a labeled tree generating \((X, d)\). Then, for every \(B^1 \in \mathbf{B}_X\), there is a subtree \(T^1\) of \(T\) such that
\begin{equation}\label{l4.3:e1}
V(T^1) = B^1
\end{equation}
and
\begin{equation}\label{l4.3:e2}
d|_{B^1 \times B^1} = d_{l^1}
\end{equation}
holds, where \(l^1\) is the restriction of the labeling \(l \colon V(T) \to \mathbb{R}^+\) on the set \(V(T^1)\).
\end{lemma}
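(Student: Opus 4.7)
The plan is to build $T^{1}$ as the subgraph of $T$ whose vertex set is $B^{1}$ and whose edges are exactly those edges of $T$ with both endpoints in $B^{1}$. Everything reduces to one geometric fact about the generating tree: the ball $B^{1}$ is \emph{convex} in $T$, meaning that for every pair of vertices $u, v \in B^{1}$, every vertex $w$ on the unique $u$--$v$ path $P_{uv}$ in $T$ again lies in $B^{1}$. Granted this, $T^{1}$ will automatically be connected, hence a subtree of $T$ by Proposition~\ref{p2.3}, and the $u$--$v$ path inside $T^{1}$ will coincide with $P_{uv}$, which via formula~\eqref{e1.1} immediately forces $d_{l^{1}} = d|_{B^{1} \times B^{1}}$.

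To establish convexity I would fix $u, v \in B^{1}$ and a vertex $w$ on $P_{uv}$, and invoke Proposition~\ref{p3.4} to rewrite $B^{1} = B_{r}(u)$ for some radius $r > 0$, so that $d(u, v) < r$. Since $P_{uw}$ is a sub-path of $P_{uv}$ (the path $P_{uv}$ passes through $w$ on its way from $u$ to $v$), the definition of $d_{l}$ in~\eqref{e1.1} gives
\[
d(u,w) \;=\; d_{l}(u,w) \;=\; \max_{x \in V(P_{uw})} l(x) \;\leqslant\; \max_{x \in V(P_{uv})} l(x) \;=\; d_{l}(u,v) \;=\; d(u,v) \;<\; r,
\]
so $w \in B_{r}(u) = B^{1}$, as required.

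Once convexity is in hand, for every $u, v \in B^{1}$ the entire path $P_{uv}$ sits inside $T^{1}$, which both proves connectedness of $T^{1}$ and identifies $P_{uv}$ with the unique $u$--$v$ path inside the subtree $T^{1}$. Applying~\eqref{e1.1} to $T^{1}$ with the restricted labeling $l^{1} = l|_{V(T^{1})}$ then yields $d_{l^{1}}(u,v) = d_{l}(u,v) = d(u,v)$ for all $u, v \in B^{1}$, which is~\eqref{l4.3:e2}. I do not foresee any real obstacle here: the only substantive step is the convexity estimate above, and it is essentially a one-line consequence of~\eqref{e1.1} combined with the fact, recorded in Proposition~\ref{p3.4}, that every point of an ultrametric ball serves as a center.
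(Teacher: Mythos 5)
Your proof is correct and follows essentially the same route as the paper's: the heart of both arguments is the observation that for a subpath $P_{uw}$ of $P_{uv}$ one has $\max_{x\in V(P_{uw})}l(x)\leqslant \max_{x\in V(P_{uv})}l(x)=d(u,v)<r$, so the ball is convex in $T$ and the induced subtree computes the restricted ultrametric. The only (immaterial) difference is that the paper builds $T^1$ as the union of the paths from the fixed center $c_1$ to the other points of $B^1$ and hence only needs the estimate for pairs containing $c_1$, whereas you take the induced subgraph and verify convexity for all pairs via Proposition~\ref{p3.4}.
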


\begin{proof}
Let \(B^1 = B_{r_1}(c_1)\) be an arbitrary open ball in \((X, d)\). If \(\card B^1 = 1\) holds, then the empty tree \(T^1\) with \(V(T^1) = \{c_1\}\) satisfy also \eqref{l4.3:e2}.

Suppose that \(\card B^1 \geqslant 2\) and consider the family
\[
\mathcal{F}_{B^1} = \{P^x \colon x \in B^1, x \neq c_1\},
\]
where \(P^x\) is the unique path joining \(c_1\) and \(x\) in \(T\). Then, by Proposition~\ref{p2.4}, the union
\begin{equation}\label{l4.3:e3}
T^1 := \bigcup_{P^x \in \mathcal{F}_{B^1}} P^x
\end{equation}
is a connected subgraph of \(T\) and, consequently, \(T^1\) is a subtree of \(T\) by Proposition~\ref{p2.3}. It follows directly from \eqref{l4.3:e3} that the inclusion \(V(T^1) \supseteq B^1\) holds. Thus, to prove equality \eqref{l4.3:e1} it suffices to show that the inclusion
\begin{equation}\label{l4.3:e4}
V(P^x) \subseteq B^1
\end{equation}
is valid for every \(P^x \in \mathcal{F}_{B^1}\).

Let us consider an arbitrary \(P^x \in \mathcal{F}_{B^1}\),
\begin{align*}
V(P^x) &= \{x_0, x_1, \ldots, x_k\},\\
E(P^x) &= \bigl\{\{x_0, x_1\}, \{x_1, x_2\}, \ldots, \{x_{k-1}, x_k\}\bigr\}, \quad k \geqslant 1,
\end{align*}
\(x_0 = c_1\) and \(x_k = x\). Then, using \eqref{e1.1}, we obtain
\begin{align*}
d(x_0, x_j) & = d_l(x_0, x_j) = \max_{1 \leqslant i \leqslant j} l(v_i) \\
& \leqslant \max_{1 \leqslant i \leqslant k} l(v_i) = d(x_0, x) < r_1
\end{align*}
for every \(j \in \{1, \ldots, k\}\). Thus,
\begin{equation}\label{l4.3:e5}
x_j \in B^1
\end{equation}
holds for every \(j \in \{1, \ldots, k\}\). Now \(x_0 = c_1\), \(c_1 \in B\) and \eqref{l4.3:e5} imply \eqref{l4.3:e4}.

To complete the proof it suffices to note that \eqref{l4.3:e2} follows from \eqref{e1.1}, since we have \(d = d_l\) and, for every pair of distinct \(u\), \(v \in V(T^1)\), there is the unique path \(P\) joining \(u\) and \(v\) in \(T\), and that \(P \subseteq T^1\) (because \(T^1\) is a subtree of \(T\)).
\end{proof}

 The next theorem can be proved using the Gurvich--Vyalyi representing tree technique (see Theorem~4.1 in \cite{Pet}) but we will  give an independent proof that allows us to obtain a similar result for locally finite spaces.

\begin{theorem}\label{t4.4}
The statements
\begin{enumerate}
\item\label{t4.4:s1} \((X, d) \in \mathbf{UGVL}\)

\noindent and
\item\label{t4.4:s2} \(\mathbf{B}_{X, d} \subseteq \mathbf{Cs}_{X, d}\)
\end{enumerate}
are equivalent for every finite nonempty ultrametric space \((X, d)\).
\end{theorem}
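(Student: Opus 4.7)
My plan is induction-free for the forward direction and induction on $\card X$ for the reverse direction, with the diametrical graph and Lemma~\ref{l4.3} doing the heavy lifting.

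For \(\ref{t4.4:s1}\Rightarrow\ref{t4.4:s2}\): let \((X, d)\) be generated by a labeled tree \(T(l)\) and fix \(B \in \mathbf{B}_X\). By Lemma~\ref{l4.3} there is a subtree \(T^1 \subseteq T\) with \(V(T^1) = B\) and \(d|_{B \times B} = d_{l^1}\), so it suffices to show that \(B \in \mathbf{Cs}_{B, d|_{B \times B}}\) and then invoke Corollary~\ref{c3.10}. The case \(\card B = 1\) is immediate from Definition~\ref{d1.2} with \(r = 0\). Otherwise, using \eqref{e1.1} and the fact that any vertex lies on a path (e.g., with a neighbor as endpoint), a short check shows \(\diam B = \max_{v \in V(T^1)} l(v) =: M\). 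Pick any \(c \in V(T^1)\) with \(l(c) = M\); then for every \(x \in B \setminus \{c\}\) the path joining \(c\) and \(x\) contains \(c\), so \(d(c, x) \geqslant l(c) = M\), and the reverse inequality holds since \(M = \diam B\). Hence \(c\) witnesses \(B \in \mathbf{Cs}_{B, d|_{B \times B}}\).

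For \(\ref{t4.4:s2}\Rightarrow\ref{t4.4:s1}\) I will induct on \(\card X\). The case \(\card X = 1\) is trivial: take the one-vertex tree with label \(0\). Suppose \(\card X \geqslant 2\) and write \(M = \diam X\). Since \(X = B_r(x)\) for any \(x \in X\) and any \(r > M\), we have \(X \in \mathbf{B}_{X, d}\) and hence \(X \in \mathbf{Cs}_{X, d}\) by assumption. By Theorem~\ref{t3.1} and Corollary~\ref{c3.3}, the diametrical graph \(G_{X, d}\) is complete multipartite and admits a spanning star with some center \(c\), so \(\{c\}\) is one of its parts and \(d(c, x) = M\) for every \(x \neq c\). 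By Theorem~\ref{t3.3} the remaining parts are the distinct balls \(B_M(x)\) with \(x \in X \setminus \{c\}\); list them as \(B_1, \dots, B_k\), so
\[
X = \{c\} \sqcup B_1 \sqcup \dots \sqcup B_k, \qquad d(u, v) = M \text{ whenever } u, v \text{ lie in different parts.}
\]
To apply the induction hypothesis to \((B_i, d|_{B_i \times B_i})\) I must check that it inherits the property \(\mathbf{B}_{B_i} \subseteq \mathbf{Cs}_{B_i}\): any ball in \(B_i\) is a ball of \((X, d)\) by Corollary~\ref{c2.4}, hence a centered sphere of \((X, d)\) by assumption, and intersecting the defining formula in Definition~\ref{d1.2} with \(B_i\) (which contains its center) shows it is a centered sphere of \(B_i\).

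So each \(B_i\) is generated by a labeled tree \(T_i(l_i)\) with \(V(T_i) = B_i\). I build \(T\) as the vertex-disjoint union of \(T_1, \ldots, T_k\) together with the new vertex \(c\) and one extra edge \(\{c, r_i\}\) to an arbitrary \(r_i \in V(T_i)\) for each \(i\); extend the labeling by \(l(c) = M\). Connectedness is clear, and a cycle in \(T\) would have to meet \(c\) (each \(T_i\) is acyclic) and thus use two of the new edges, but the \(T_i\) are mutually vertex-disjoint so no return path exists; hence \(T\) is a tree. For \(u, v\) in the same \(V(T_i)\) the \(T\)-path stays in \(T_i\), giving \(d_l(u, v) = d_{l_i}(u, v) = d(u, v)\); for \(u, v\) in different parts (including the case one of them is \(c\)) the \(T\)-path crosses \(c\), so \(d_l(u, v) \geqslant l(c) = M\), with equality because all labels in any \(T_i\) are bounded by \(\diam B_i < M\). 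Since \(l(c) = M > 0\), Theorem~\ref{t1.4}'s hypothesis is preserved on the new edges, completing the induction.

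The main obstacle, and the only spot where care is really needed, is the reverse direction's bookkeeping: confirming that the hypothesis \(\mathbf{B}_X \subseteq \mathbf{Cs}_X\) descends to each part \(B_i\) (where Corollaries~\ref{c2.4} and \ref{c3.10} and Proposition~\ref{p3.4} are what make the argument clean), and verifying that the assembled graph is acyclic with the correct metric. The labeling choice \(l(c) = M\) is forced if one wants the extra edges to contribute exactly \(M\) to the distances between distinct parts.
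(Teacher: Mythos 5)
Your proof is correct, and its two halves relate to the paper's argument differently. In the reverse direction you follow essentially the same route as the paper: induction on \(\card X\), decomposition of \(X\) into the parts of the diametrical graph via Theorem~\ref{t3.3}, extraction of a singleton part \(\{c\}\) via Corollary~\ref{c3.3}, descent of the hypothesis \(\mathbf{B}\subseteq\mathbf{Cs}\) to the remaining parts, and reassembly by attaching \(c\), labeled \(\diam X\), to one vertex of each generating subtree. (Your explicit ``intersect the defining formula of Definition~\ref{d1.2} with \(B_i\)'' step is actually more careful than the paper's appeal to Corollary~\ref{c3.10}, whose stated inclusion runs in the opposite direction; and you check acyclicity directly where the paper counts vertices and edges via Proposition~\ref{p2.2} --- a cosmetic difference.) In the forward direction you genuinely diverge: the paper argues by contradiction, showing that if every part of \(G_{X,d}\) had at least two points then \(\max_{u\in X}l(u)<\diam X\), whereas you directly exhibit a center, namely any vertex \(c\) of the generating subtree with \(l(c)=\max_{v}l(v)=\diam B\), and note that every path from \(c\) contains \(c\), forcing \(d(c,x)=\diam B\) for all \(x\in B\setminus\{c\}\). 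This is shorter, constructive, and (unlike the paper's write-up, which only treats the case \(B=X\) explicitly) handles an arbitrary ball uniformly through Lemma~\ref{l4.3} and Corollary~\ref{c3.10}.

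One point to tighten in the reverse direction: the bound ``all labels in any \(T_i\) are bounded by \(\diam B_i<M\)'' holds for \(\card B_i\geqslant 2\) by the same computation as in your forward direction, but for a singleton part \(B_i=\{b\}\) the induction hypothesis only supplies \emph{some} one-vertex labeled tree, whose label need not be \(0\); if \(l_i(b)>\diam X\), your construction returns the wrong distance from \(b\) to the other parts. This is repaired in one line by normalizing the label of every singleton part to \(0\), as in your base case, but it should be said explicitly.
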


\begin{proof}
\(\ref{t4.4:s1} \Rightarrow \ref{t4.4:s2}\). By Proposition~\ref{p4.1}, the logical equivalence \(\ref{t4.4:s1} \Leftrightarrow \ref{t4.4:s2}\) is valid if \(\card X \leqslant 3\) holds. Thus, without loss of generality, we can assume that
\begin{equation}\label{t4.4:e1}
\card X \geqslant 4.
\end{equation}

Let \((X, d)\) belong to the class \(\mathbf{UGVL}\). Then there is a labeled tree \(T = T(l)\) such that \(V(T) = X\) and \(d_l = d\) holds. We must show that the inclusion
\begin{equation}\label{t4.4:e4}
\mathbf{Cs}_{X, d} \supseteq \mathbf{B}_{X, d}
\end{equation}
is valid, i.e., every open ball \(B\) in \((X, d)\) is a centered sphere in \((X, d)\). Let us make sure that the last statement is true for the case \(B = X\).

The finiteness of \(X\) and inequality \eqref{t4.4:e1} imply that the diametrical graph \(G_{X, d}\) is nonempty. Using Corollary~\ref{c3.3}, we obtain that \(X \in \mathbf{Cs}_{X, d}\) holds if and only if at least one part of the complete multipartite graph \(G_{X, d}\) contains exactly one point. Let \(\{A_1, \ldots, A_k\}\) be the set of all parts of \(G_{X, d}\). Suppose contrary that the inequality
\begin{equation}\label{t4.4:e5}
\card A_i \geqslant 2
\end{equation}
holds for every \(i \in \{1, \ldots, k\}\). Let us consider a subset \(\{c_1, \ldots, c_k\}\) of the set \(X\) such that \(c_i \in A_i\) for every \(i \in \{1, \ldots, k\}\). Then, by Theorem~\ref{t3.3} and Proposition~\ref{p3.4}, for every \(i \in \{1, \ldots, k\}\) we have
\begin{equation}\label{t4.4:e6}
A_i = B_r(c_i)
\end{equation}
with \(r = \diam X\). Lemma~\ref{l4.3} implies now that all ultrametric spaces \((A_1, d|_{A_1 \times A_1})\), \ldots, \((A_k, d|_{A_k \times A_k})\) belong to the class \(\mathbf{UGVL}\). In particular, by Lemma~\ref{l4.3}, there are labeled subtrees \(T^1(l_1)\), \ldots, \(T^k(l_k)\) of the labeled tree \(T(l)\) such that
\begin{equation}\label{t4.4:e7}
V(T^i) = A_i \quad \text{and} \quad d|_{A_i \times A_i} = d_{l_i}
\end{equation}
holds with \(l_i = l|_{A_i}\) for every \(i \in \{1, \ldots, k\}\). Now using formula~\eqref{t4.4:e6} with \(r = \diam X\) and \eqref{t4.4:e7} we obtain the strict inequality
\begin{equation}\label{t4.4:e8}
\max_{u \in A_i} l(u) < \diam X
\end{equation}
for every \(i \in \{1, \ldots, k\}\). Since the number \(k\) of parts of \(G_{X, d}\) is finite and \(\{A_1, \ldots, A_k\}\) is a partition of \(X\), inequality \eqref{t4.4:e8} give us
\begin{equation}\label{t4.4:e9}
\max_{u \in X} l(u) = \max_{1 \leqslant i \leqslant k} \max_{u \in A_i} l(u) < \diam X.
\end{equation}
Now to complete the proof of validity \(\ref{t4.4:s1} \Rightarrow \ref{t4.4:s2}\) it suffices to note that the finiteness of \(X\) and the definition of the ultrametric \(d_l\) imply the equality
\[
\max_{u \in X} l(u) = \diam X,
\]
contrary to \eqref{t4.4:e9}.

\(\ref{t4.4:s2} \Rightarrow \ref{t4.4:s1}\). We must show that
\begin{equation}\label{t4.4:e10}
(X, d) \in \mathbf{UGVL}
\end{equation}
whenever \((X, d)\) is a finite nonempty ultrametric space satisfying the inclusion
\begin{equation}\label{t4.4:e11}
\mathbf{B}_{X, d} \subseteq \mathbf{Cs}_{X, d}.
\end{equation}
To prove the above statement we will use the induction on \(\card X\).

By Proposition~\ref{p4.1} we obtain that \(\eqref{t4.4:e11} \Rightarrow \eqref{t4.4:e10}\) is valid for every ultrametric space \((X, d)\) with \(1 \leqslant \card X \leqslant 3\).

Let \(n \geqslant 3\) be a given integer number. Suppose that \(\eqref{t4.4:e11} \Rightarrow \eqref{t4.4:e10}\) is valid if
\begin{equation}\label{t4.4:e12}
1 \leqslant \card X \leqslant n.
\end{equation}
Let us consider an arbitrary fixed ultrametric space \((X, d)\) such that \(\card X = n+1\) and \eqref{t4.4:e11} holds.

Let \(\{A_1, \ldots, A_k\}\) be the set of all parts of the diametrical graph \(G_{X, d}\). By Theorem~\ref{t3.3}, every \(A_i\), \(i \in \{1, \ldots, k\}\), is an open ball in \((X, d)\). Now, using Corollaries~\ref{c2.4} and \ref{c3.10}, we see that \eqref{t4.4:e11} implies the inclusion
\begin{equation}\label{t4.4:e13}
\mathbf{B}_{A_i, d|_{A_i \times A_i}} \subseteq \mathbf{Cs}_{A_i, d|_{A_i \times A_i}}
\end{equation}
for every \(i \in \{1, \ldots, k\}\). Since each \(A_i\) is a proper subset of \(X\), the induction hypothesis gives us the membership
\[
\bigl(A_i, d|_{A_i \times A_i}\bigr) \in \mathbf{UGVL}
\]
for every \(i \in \{1, \ldots, k\}\). Consequently, for every \(i \in \{1, \ldots, k\}\), we can find a labeled tree \(T^i(l_i)\) such that
\begin{equation}\label{t4.4:e14}
V(T^i) = A_i \quad \text{and} \quad d|_{A_i \times A_i} = d_{l_i}.
\end{equation}

Let \(\{c_1, \ldots, c_k\}\) be a subset of the set \(X\) such that \(c_i \in A_i\) holds for every \(i \in \{1, \ldots, k\}\). By Corollary~\ref{c3.3}, equality \eqref{t4.4:e11} implies that there is \(i \in \{1, \ldots, k\}\) such that \(\card A_i = 1\). Without loss of generality, we may assume that the set \(A_1\) is a singleton, \(A_1 = \{c_1\}\).

Let us expand the labeled tree \(T^i = T^i(l_i)\) to an labeled tree \(T_1^i = T_1^i(l_{i, 1})\) by the rule:

\(V(T_1^i) = \{c_1\} \cup V(T^i)\), \(E(T_1^i) = \{c_1, c_i\} \cup E(T^i)\) and
\begin{equation}\label{t4.4:e15}
l_{i, 1} = \begin{cases}
l_i(u) & \text{if } u \in V(T^i)\\
\diam X & \text{if } u = c_1
\end{cases}
\end{equation}
for every \(i \in \{2, \ldots, k\}\).

By Proposition~\ref{p2.4}, the graph
\[
T = \bigcup_{i=2}^k T_1^i
\]
is connected. Now, using Proposition~\ref{p2.2}, we can prove that \(T\) is a tree. Indeed, by Proposition~\ref{p2.2}, \(T\) is a tree iff
\begin{equation}\label{t4.4:e2}
\card V(T) - \card E(T) = 1.
\end{equation}
To prove the last equality we note that
\begin{align*}
\card V(T) &= \sum_{i=1}^{k} \card A_i = 1 + \sum_{i=2}^{k} \card V(T^i) \\
& = 1 + \sum_{i=2}^{k} \bigl(\card V(T_1^i) - 1\bigr) = 2 - k + \sum_{i=2}^{k} \card V(T_1^i)\\
\intertext{and}
\card E(T) &= \sum_{i=2}^{k} \card E(T_1^i).
\end{align*}
Consequently, we have
\begin{equation}\label{t4.4:e2.1}
\card V(T) - \card E(T) = 2 - k + \sum_{i=2}^{k} \bigl(\card V(T_1^i) - \card E(T_1^i)\bigr).
\end{equation}
Since every \(T_1^i\) is a tree, \(\card V(T_1^i) - \card E(T_1^i) = 1\) holds for each \(i \in \{2, \ldots, k\}\). Thus, the right half of formula~\eqref{t4.4:e2.1} can be written as
\[
2 - k + \sum_{i=2}^{k} \bigl(\card V(T_1^i) - \card E(T_1^i)\bigr) = 2 - k + (k-1) = 1,
\]
that implies \eqref{t4.4:e2}.

Using \eqref{t4.4:e15} we can find a labeling \(l \colon V(T) \to \mathbb{R}^+\) such that
\begin{equation}\label{t4.4:e16}
l|_{V(T_1^i)} = l_{i,1}
\end{equation}
holds for every \(i \in \{2, \ldots, k\}\). Then we have \(V(T) = X\) and, in addition, equalities \eqref{t4.4:e14}, \eqref{t4.4:e15} and \eqref{t4.4:e16} imply the equality \(d_l = d\). Thus, \eqref{t4.4:e10} is valid.
\end{proof}

The second part of the proof of Theorem~\ref{t4.4} (see, in particular, formula~\eqref{t4.4:e16}) gives us the following.

\begin{corollary}\label{c4.4}
Let $(Y, \rho) \in \mathbf{UGVL}$ be finite, let the diametrical graph $G_{Y, \rho}$ be complete multipartite with parts $B_1^Y, \ldots, B_n^Y$ and let \linebreak $T_1=T_1(l), \ldots, T_n=T_n(l)$ be labeled trees generating, respectively, the ultrametric spaces $\left(B_1^Y, \, \rho_{|B_1^Y \times B_1^Y}\right), \ldots, \left(B_n^Y, \, \rho_{|B_n^Y \times B_n^Y}\right)$. Then there exists a labeled tree $T=T(l)$ generating $(Y, \rho)$ such that $T_i \subseteq T$ and $ l_{|V(T_i)} = l_i $ for every $i=1, \, 2\, \ldots, n$.

\end{corollary}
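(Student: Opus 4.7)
The plan is to mirror the construction used in the second half of the proof of Theorem~\ref{t4.4}, where trees generating the ultrametric subspaces on the parts of $G_{X,d}$ are assembled into a tree for the whole space. Since $(Y,\rho)\in\mathbf{UGVL}$, Theorem~\ref{t4.4} gives $\mathbf{B}_{Y,\rho}\subseteq\mathbf{Cs}_{Y,\rho}$, so in particular $Y\in\mathbf{Cs}_{Y,\rho}$. By Corollary~\ref{c3.3}, at least one of the parts $B_i^Y$ of the diametrical graph $G_{Y,\rho}$ is a singleton. After relabeling, I may assume $B_1^Y=\{c_1\}$, and the given labeled tree $T_1$ is the one-vertex tree on $c_1$, whose label at $c_1$ is unconstrained by the generation condition and may be taken to equal $\diam Y$.

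Next I pick any $c_i\in B_i^Y$ for $i=2,\ldots,n$ and form
\[
T:=T_1\cup T_2\cup\cdots\cup T_n
\]
with additional edges $\{c_1,c_i\}$ for $i=2,\ldots,n$. I define $l\colon V(T)\to\mathbb{R}^+$ by $l|_{V(T_i)}:=l_i$ for $i\geqslant 2$ and $l(c_1):=\diam Y$; this is consistent with $l|_{V(T_1)}=l_1$ by our choice of $l_1(c_1)$. By construction $T_i\subseteq T$ and $l|_{V(T_i)}=l_i$ for every $i$.

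To see that $T$ is a tree, I argue as in Theorem~\ref{t4.4}: connectedness follows from Proposition~\ref{p2.4} because each $T_i$ is connected and all share (after adjoining the edge $\{c_1,c_i\}$) the vertex $c_1$, and Proposition~\ref{p2.2} with the vertex/edge count
\[
\card V(T)-\card E(T)=\sum_{i=2}^{n}\bigl(\card V(T_i)+1\bigr)-\sum_{i=2}^{n}\bigl(\card E(T_i)+1\bigr)=\sum_{i=2}^{n}1-(n-2)=1
\]
(using $B_1^Y=\{c_1\}$ and that each $T_i$ is itself a tree) confirms acyclicity.

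It remains to check $d_l=\rho$. For $u,v$ lying in the same part $B_i^Y$, the unique $u$-$v$ path in $T$ stays inside $T_i$ (this uses the same argument as in Lemma~\ref{l4.3}: the union of such a path with the cross-part edges would otherwise create a cycle), so $d_l(u,v)=d_{l_i}(u,v)=\rho(u,v)$ since $T_i$ generates $B_i^Y$. For $u\in B_i^Y$ and $v\in B_j^Y$ with $i\neq j$, the unique path joining them in $T$ passes through $c_1$, and $l(c_1)=\diam Y\geqslant l(w)$ for every $w\in V(T)$ (since every $l(w)$ occurs as some $d_{l_j}$-distance, which is bounded by $\diam B_j^Y\leqslant\diam Y$); hence $d_l(u,v)=\diam Y=\rho(u,v)$ by definition of the diametrical graph. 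The only delicate point, and the step I expect to be the main obstacle, is the verification that paths between same-part vertices remain inside $T_i$ rather than detouring through $c_1$; this follows from uniqueness of paths in the tree $T$ combined with the fact that $T_i$ itself is already a tree containing both endpoints.
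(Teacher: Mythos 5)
Your proposal is correct and follows essentially the same route as the paper, whose "proof" simply points back to the construction in the second half of Theorem~\ref{t4.4}: take the singleton part $\{c_1\}$ guaranteed by Corollary~\ref{c3.3}, attach each $T_i$ to $c_1$ by an edge $\{c_1,c_i\}$, label $c_1$ with $\diam Y$, and count vertices and edges via Proposition~\ref{p2.2}. You additionally spell out the verification that $d_l=\rho$ (same-part paths stay in $T_i$, cross-part paths pass through $c_1$), which the paper leaves implicit.
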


Let us now turn to the case of locally finite ultrametric spaces.
\medskip

The following theorem is the first main result of the paper.

\begin{theorem}\label{t4.5}
Let \((X, d)\) be a locally finite nonempty ultrametric space. Then the following statements are equivalent:
\begin{enumerate}
\item \((X, d) \in \mathbf{UGVL}\).
\item \(\mathbf{B}_{X, d} \subseteq \mathbf{Cs}_{X, d}\).
\item For every \(B \in \mathbf{B}_{X, d}\) with \(\card B \geqslant 2\) there is a star $S$ such that $V(S)=B$ and
$S \subseteq G_{B, d_{|B \times B}}$ where $G_{B, d_{|B \times B}}$ is the diametrical graph of the space $(B, d_{|B \times B})$.
\end{enumerate}
\end{theorem}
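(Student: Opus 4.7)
The plan is to establish the cycle $(i)\Rightarrow(ii)\Rightarrow(i)$ together with the equivalence $(ii)\Leftrightarrow(iii)$. For the implication $(i)\Rightarrow(ii)$, I would fix an arbitrary $B\in\mathbf{B}_{X,d}$. Local finiteness makes $B$ a finite set, and Lemma~\ref{l4.3} hands us a labeled subtree of the generating tree whose induced ultrametric coincides with $d|_{B\times B}$; thus $(B,d|_{B\times B})\in\mathbf{UGVL}$. Viewing $B$ as an open ball in itself, Theorem~\ref{t4.4} gives $B\in\mathbf{Cs}_{B,d|_{B\times B}}$, and Corollary~\ref{c3.10} transfers this to $B\in\mathbf{Cs}_{X,d}$.

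The equivalence $(ii)\Leftrightarrow(iii)$ is essentially bookkeeping. Fix $B\in\mathbf{B}_{X,d}$ with $\card B\geqslant 2$ (singleton balls being vacuous for both conditions). If $B\in\mathbf{Cs}_{X,d}$ is witnessed by a center $c'\in B$ and radius $r$, then Lemma~\ref{l3.1} forces $r=\diam B$; consequently $d(x,c')=\diam B$ for every $x\in B\setminus\{c'\}$, which exhibits a star $S\subseteq G_{B,d|_{B\times B}}$ with center $c'$ and $V(S)=B$. Conversely, any such star supplies a candidate center $c'$; the identity $B=\{x\in X\colon d(x,c')\leqslant\diam B\}$ (from the proof of Proposition~\ref{c3.11}(i)) rules out points of $X\setminus B$ at distance exactly $\diam B$ from $c'$, so taking $r=\diam B$ realizes $B$ as a centered sphere in $(X,d)$.

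The substantial implication is $(ii)\Rightarrow(i)$. The finite case is already Theorem~\ref{t4.4}, so I assume $X$ is infinite. Fix $c\in X$ and invoke Proposition~\ref{c3.11} to enumerate $\mathbf{B}^c_{X,d}=\{B_1,B_2,\ldots\}$ with $\diam B_n$ strictly increasing to infinity; the same proposition (via the representation $B_n=\{x\colon d(c,x)\leqslant\diam B_n\}$) yields $B_n\subseteq B_{n+1}$ and $\bigcup_n B_n=X$. The goal is to construct labeled trees $T_n=T_n(l_n)$ with $V(T_n)=B_n$, generating $(B_n,d|_{B_n\times B_n})$, and satisfying $T_n\subseteq T_{n+1}$ together with $l_{n+1}|_{V(T_n)}=l_n$. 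Granted this, Proposition~\ref{p2.5} makes $T:=\bigcup_n T_n$ a tree with $V(T)=X$, and the common extension $l$ of the $l_n$'s satisfies $d_l=d$ because any two points lie in a common $B_n$.

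The inductive step bears the weight of the argument and is the main obstacle. For the passage $n\to n+1$, Theorem~\ref{t3.3} identifies the part of the diametrical graph of $B_{n+1}$ containing $c$ with $\{x\in B_{n+1}\colon d(c,x)<\diam B_{n+1}\}$, which is exactly $B_n$; label the remaining parts $A_2,\ldots,A_k$. Each $A_i$ is an open ball of $(X,d)$ by Corollary~\ref{c2.4}, so (as in the first paragraph) it inherits hypothesis $(ii)$ and hence, by Theorem~\ref{t4.4}, admits a labeled tree $T^i$ generating $(A_i,d|_{A_i\times A_i})$. Applying Corollary~\ref{c4.4} to $B_{n+1}$ with the parts $B_n,A_2,\ldots,A_k$ and the labeled trees $T_n,T^2,\ldots,T^k$ produces a labeled tree $T_{n+1}$ generating $B_{n+1}$ that contains each of these as a labeled subtree. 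The delicate point is precisely this: $T_{n+1}$ must extend $T_n$ without disturbing any label previously assigned, and Corollary~\ref{c4.4} is designed to provide exactly this flexibility, so the induction closes.
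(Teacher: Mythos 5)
Your proposal is correct and follows essentially the same route as the paper: the implication \((i)\Rightarrow(ii)\) via Lemma~\ref{l4.3} and Theorem~\ref{t4.4}, the equivalence \((ii)\Leftrightarrow(iii)\) via Lemma~\ref{l3.1} and the centered-sphere/star correspondence, and \((ii)\Rightarrow(i)\) by enumerating the balls containing a fixed point \(c\) with Proposition~\ref{c3.11}, gluing nested generating trees with Corollary~\ref{c4.4}, and passing to the union via Proposition~\ref{p2.5}. Your explicit identification of the part of \(G_{B_{n+1},d|_{B_{n+1}\times B_{n+1}}}\) containing \(c\) with \(B_n\) is a useful detail that the paper leaves implicit, but the argument is the same.
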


\begin{proof}
Corollaries~\ref{p3.4}, \ref{c2.4} and Corollary~\ref{c3.3} show that the logical equivalence $(ii) \Leftrightarrow (iii)$ is valid.

Moreover, if $(X,d) \in \mathbf{UGVL}$ holds, then, for every $B \in \mathbf{B}_{X,d}$ we have $ (B, d_{|B\times B}) \in \mathbf{UGVL}$ by Lemma~\ref{l4.3}. Consequently, using Corollary~\ref{p3.4}, Corollary~\ref{c2.4}, and the finiteness of balls in locally finite metric spaces we see that the validity of $(i) \Rightarrow (ii)$ follows from Theorem~\ref{t4.4}.

To complete the proof it suffices to show that $(ii) \Rightarrow (i)$ is also valid.

Let us consider the case when $(X,d)$ is infinite. In the case when $(X,d)$ is finite the validity of $(ii) \Rightarrow (i)$ was proved in Theorem~\ref{t4.4}.

Suppose that $(ii)$ holds. Let $c$ be a point of $X$ and let
$$
\mathbf{B}^c_{X,d} \colon= \{B \in \mathbf{B}_{X,d} \colon c\in B \}.
$$
Then, by Proposition~\ref{c3.11} there exists an infinite sequence $(B_n)_{n \in \mathbb{N}}$ of open balls satisfying the conditions:
\begin{itemize}
    \item[$(s_1)$] \label{t4.5:s2} $\diam B_n < \diam B_{n+1}$ for every $n \in \mathbb{N}$;
    \item[$(s_2)$] \label{t4.5:s3} $\mathbf{B}^c_{X,d} = \{B_n \colon n \in \mathbb{N} \}$.
 \end{itemize}

Every open ball is finite subset of $X$ because $(X,d)$ is locally finite. Consequently by Theorem~\ref{t4.4}, the ultrametric space $\left( B_n, d_{|B_n \times B_n} \right)$ belongs to the class $\mathbf{UGVL}$ for every $n \in \mathbb{N}$.

Now using Corollary~\ref{c4.4} and statements $(s_1)$ we can find a sequence $(T_n)_{n\in \mathbb{N}}$ of labeled trees $T_n=T_n(l_n)$ such that:
\begin{itemize}
    \item[$(s_3)$] \label{t4.5:s4} $\left( B_n, d_{|B_n \times B_n} \right)$ is generated by $T_n(l_n)$;
    \item[$(s_4)$] \label{t4.5:s5} $T_n \subseteq T_{n+1}$  and $l_{n+1 | V(T_n)} = l_n$ for every $n \in \mathbb{N}.$
\end{itemize}
Write
\begin{equation*}
    T \colon = \bigcup\limits_{n=1}^{\infty} T_n.
\end{equation*}
Then $T$ is a tree by Proposition~\ref{p2.5}. It follows from statements ($s_4$)
and the equality
\begin{equation} \label{t4.5_p:e1}
    V(T) = \bigcup\limits_{n=1}^{\infty} V(T_n)
\end{equation}
that there is a labeling $l \colon V(T) \to \mathbb{R}^+$ such that
\begin{equation}\label{t4.5_p:e2}
    l_{|V(T)} = l_n
\end{equation}
for every $n \in \mathbb{N}$. Statements $(s_2)$ and $(s_3)$ give us
$$
X= \bigcup\limits_{n=1}^{\infty} B_n= \bigcup\limits_{n=1}^{\infty} V(T_n),
$$
which together with \eqref{t4.5_p:e1} implies the equality
$$V(T)=X.$$

Now using the last equality, equality~\eqref{t4.5_p:e2} and $(s_3)$--$(s_4)$ it is easy to show that $(X,d)$ is generated by labeled tree $T=T(l)$.

Thus the membership $(X,d) \in \mathbf{UGVL}$ is valid.
\end{proof}

Theorem \ref{t4.5} claims, in particular, that the inclusion \( \mathbf{B}_{X,d}\subseteq \mathbf{Cs}_{X,d} \) implies \( (X,d) \in \mathbf{UGVL}\) for locally finite ultrametric spaces \( (X, d)\). In the rest of the section, we want to show that the reserve inclusion \(\mathbf{Cs}_{X,d} \subseteq \mathbf{B}_{X,d}\) holds iff the metric \(d\) is discrete.

We say that a metric \( d \colon X \times X \to \mathbb{R}^+\) is \emph{discrete} if there is a constant $k>0$ such that
\begin{equation}\label{4.2.3}
d(x,y)=k
\end{equation}
whenever $x$ and $y$ are distinct points of $X$.

\begin{remark}\label{rem_1.9_whrn_all}
The standard definition of \emph{discrete metric} can be formulated as:
``The metric on X is discrete if the distance from each point of X to every
other point of X is one.'' (See, for example, \cite[p.~4]{Sea2007}.)
\end{remark}

\begin{lemma}\label{l4.7}
The following conditions are equivalent for every metric space \((X, d)\):
\begin{enumerate}
\item \label{l4.7:i} The metric  \(d\) is discrete.
\item \label{l4.7:ii} For each $x\in X$ there is $k>0$ such that \eqref{4.2.3} holds whenever  \( y \in X \setminus \{x\}\).
\end{enumerate}
\end{lemma}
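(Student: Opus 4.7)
The plan is to treat the trivial implication first and then handle the nontrivial direction by exploiting the symmetry of the metric.

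For $\ref{l4.7:i} \Rightarrow \ref{l4.7:ii}$, I would simply observe that the single constant $k > 0$ provided by discreteness of $d$ can be used as the constant at every point $x \in X$, so the condition in $\ref{l4.7:ii}$ is satisfied with the same $k$ for all $x$.

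For the converse $\ref{l4.7:ii} \Rightarrow \ref{l4.7:i}$, the trivial cases $\card X \leqslant 1$ are vacuous (both $\ref{l4.7:i}$ and $\ref{l4.7:ii}$ hold by default, since there are no pairs of distinct points). Assume $\card X \geqslant 2$. For each $x \in X$, let $k_x > 0$ be the constant provided by $\ref{l4.7:ii}$, so that $d(x,y) = k_x$ for every $y \in X \setminus \{x\}$. The key step is to show that $k_x$ does not depend on $x$. Pick any two distinct points $x, y \in X$. Applying the constant at $x$ with the point $y$ gives $d(x,y) = k_x$, while applying the constant at $y$ with the point $x$ gives $d(y,x) = k_y$. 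Since $d(x,y) = d(y,x)$ by axiom (i) of the metric, we conclude $k_x = k_y$. Thus there is a single positive constant $k := k_x$ (for any fixed $x$) such that $d(u,v) = k$ holds whenever $u$ and $v$ are distinct points of $X$, which is exactly condition $\ref{l4.7:i}$.

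There is no real obstacle here: the argument is a one-line use of symmetry of $d$ once the pointwise constants are named. The only item that deserves a brief sentence in the writeup is the degenerate case $\card X \leqslant 1$, which is handled by the vacuous reading of both conditions.
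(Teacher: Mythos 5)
Your proposal is correct. The forward implication is handled identically to the paper (it is trivial), and your converse reaches the same conclusion by a slightly more streamlined route: you name the pointwise constants $k_x$, observe that for any two distinct points $x,y$ the symmetry axiom forces $d(x,y)=k_x$ and $d(y,x)=k_y$ to coincide, and conclude that the constant is global. The paper instead argues by contradiction: it assumes two pairs $\{x,y\}$ and $\{u,v\}$ of distinct points with $d(x,y)\neq d(u,v)$ and splits into cases according to whether the pairs share a point, using condition \ref{l4.7:ii} at $x$ and at $u$ to force $d(x,y)=d(x,u)=d(u,x)=d(u,v)$ in the disjoint case. Both arguments rest on exactly the same ingredient (symmetry of $d$ identifies the constants attached to different base points); your direct version avoids the case analysis entirely and is, if anything, cleaner. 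Your remark about the degenerate case $\card X \leqslant 1$ being vacuous is a harmless addition that the paper leaves implicit. No gaps.
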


\begin{proof}
\(\ref{l4.7:i} \Rightarrow \ref{l4.7:ii}\). This implication is trivially valid.

\(\ref{l4.7:ii} \Rightarrow \ref{l4.7:i}\). Let \(\ref{l4.7:ii}\) hold but \( d \) not be a discrete metric. Then there are some points $x, y, u, v \in X$ such that
\begin{equation}\label{l4.7_p_4.24}
d(x,y) \neq d(u,v),
\end{equation}
and
\begin{equation}\label{l4.7_p_4.25}
\min\{ d(x,y), d(u,v)\} >0.
\end{equation}

If the sets \( \{x,y\}\) and \(\{u,v\}\) have a common point, then, without loss of generality we suppose $x=u$. From \eqref{l4.7_p_4.24} and \eqref{l4.7_p_4.25} it follows that
$$ x \neq y, \  u \neq v, \ \textrm{ and } d(x,y) \neq d(u,v),$$
contrary to  \(\ref{l4.7:ii}\). Consequently the sets $\{x,y\}$ and $\{u,v\}$ are disjoint.

Now using condition \(\ref{l4.7:ii}\) again, we obtain
$$
d(x,y)=d(x,u) \neq 0
$$
and
$$
d(u,x)=d(u,v) \neq 0,
$$
that implies $d(x,y)=d(u,v)$. The last equality contradicts  \eqref{l4.7_p_4.24}. The validity of \(\ref{l4.7:ii} \Rightarrow \ref{l4.7:i}\) follows.
\end{proof}

\begin{theorem}\label{t4.8}
Let \((X, d)\) be a nonempty ultrametric space. Then the following statements are equivalent:
\begin{enumerate}
\item \( \mathbf{B}_{X,d} \supseteq \mathbf{Cs}_{X, d} \) .
\item   The metric  \(d\) is discrete.
\item \( \mathbf{B}_{X,d} = \mathbf{Cs}_{X, d} \).
\end{enumerate}
\end{theorem}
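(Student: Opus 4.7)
The plan is to prove $(ii) \Rightarrow (iii) \Rightarrow (i) \Rightarrow (ii)$, where $(iii) \Rightarrow (i)$ is immediate from set-inclusion. For $(ii) \Rightarrow (iii)$ I would compute both families explicitly: if $d(x,y) = k$ for all distinct $x, y \in X$, then every open ball $B_r(c)$ equals $\{c\}$ when $0 < r \leqslant k$ and equals $X$ when $r > k$; similarly every centered sphere $\{x : d(x,c) = r\} \cup \{c\}$ equals $X$ when $r = k$ and equals $\{c\}$ otherwise. So $\mathbf{B}_{X,d}$ and $\mathbf{Cs}_{X,d}$ coincide (with $\{\{c\} : c \in X\} \cup \{X\}$, or just with $\{\{c\}\}$ if $\card X = 1$).

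The substantive content is $(i) \Rightarrow (ii)$. I would invoke Lemma~\ref{l4.7} and reduce the task to showing that, for each $x \in X$, the set
\[
D_x = \{d(x,y) : y \in X \setminus \{x\}\}
\]
is a singleton (assuming $\card X \geqslant 2$; otherwise $d$ is discrete vacuously). Fix $x$ and $r \in D_x$. The centered sphere $C = \{y \in X : d(x,y) = r\} \cup \{x\}$ lies in $\mathbf{Cs}_{X,d}$, so by (i) it is an open ball. Since $x \in C$, Proposition~\ref{p3.4} lets us re-center and write $C = B_s(x)$ for some $s > 0$. The equality $C = B_s(x)$ forces $s > r$ (take any $y$ with $d(x,y) = r$) and forces every $y$ with $0 < d(x,y) < s$ to satisfy $d(x,y) = r$.

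If there existed $r' \in D_x$ with $r' < r$, then picking $y$ with $d(x,y) = r'$ we would have $y \in B_s(x) = C$ while $d(x,y) \notin \{0, r\}$, a contradiction. Hence $r = \min D_x$; since $r \in D_x$ was arbitrary, $D_x$ is a singleton. Lemma~\ref{l4.7} then gives that $d$ is discrete.

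I do not anticipate a genuine obstacle: the only delicate point is applying Proposition~\ref{p3.4} to re-center the ball $B_s(a) = C$ at $x$, and this is precisely the ultrametric feature that makes the argument go through — in a general metric space the resulting contradiction would fail because the open ball realising $C$ need not contain a neighbourhood of $x$ of radius greater than $r$.
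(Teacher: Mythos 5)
Your proposal is correct and follows essentially the same route as the paper: both reduce $(i)\Rightarrow(ii)$ to Lemma~\ref{l4.7}, take the centered sphere of radius $r$ about a point $x$, use condition $(i)$ together with Proposition~\ref{p3.4} to rewrite it as a ball $B_s(x)$ with $s>r$, and derive a contradiction from any smaller positive distance to $x$. The only differences are cosmetic (you argue directly that each $D_x$ is a singleton where the paper argues by contradiction from non-discreteness, and you spell out $(ii)\Rightarrow(iii)$ which the paper calls evident).
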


\begin{proof}
The implications \( (ii) \Rightarrow (iii) \) and \( (iii) \Rightarrow (i) \) are evidently valid. Let us prove the validity of \( (i) \Rightarrow (ii) \).

Let $(i)$ hold but $d$ not be a discrete metric. Then, by Lemma~\ref{l4.7} there are distinct points $a, b, c \in X$ such that
\begin{equation}\label{e4.26}
d(c,a) > d(c,b) >0.
\end{equation}
Write
\begin{equation}\label{e4.27}
    C \colon = \{ x \in X \colon d(x,c)=r \} \cup \{c\}
\end{equation}
where
\begin{equation}\label{e4.28}
    r= d(c,a).
\end{equation}
Then $C$ is a centered sphere in \( (X,d), a \in C\), and
\begin{equation}\label{e4.29}
   b \notin C,
\end{equation}
by \eqref{e4.26}--\eqref{e4.28}. By condition $(i)$ there is \(B_1 \in \mathbf{B}_{X,d}\),
\begin{equation*}
    B_1 \colon= \{ x \in X \colon d(x,c_1) < r_1\}
\end{equation*}
such that $B_1=C$. Since $c \in C$, Proposition~\ref{p3.4} implies that a center $c$ of the centered sphere $C$ also is the center of the ball $B_1$,
\begin{equation}\label{e4.30}
     B_1 = \{ x \in X \colon d(x,c) < r_1\}.
\end{equation}
Now using \eqref{e4.28} and $a \in C$  we obtain the inequality $r<r_1$. Consequently $b \in B_1$ holds by \eqref{e4.26} and \eqref{e4.30}. To complete the proof it suffices to note that $b \in B_1$ and $B_1=C$ give us $b \in C$, contrary to \eqref{e4.29}.
\end{proof}

\section{Isometric embedding of finite ultrametric spaces in \(\mathbf{UGVL}\)-spaces}

Now we want to show that any finite ultrametric space can be extended to some minimal \(\mathbf{UGVL}\)-space, and that such an extension is unique up to isometry.

\begin{definition}\label{d5.1}
Let \((X, d)\) be a ultrametric space. An \(\mathbf{UGVL}\)-space \((Y, \rho)\) is an \(\mathbf{UGVL}\)-\emph{extension} of \((X, d)\) if there is \(Y_1 \subseteq Y\) such that \((Y_1, \rho|_{Y_1 \times Y_1})\) is isometric to \((X, d)\).
\end{definition}

In what follows we will say that an \(\mathbf{UGVL}\)-extension \((Y, \rho)\) of \((X, d)\) is \emph{minimal} if, for every proper subset \(Y_0\) of \(Y\), the ultrametric space \((Y_0, \rho|_{Y_0 \times Y_0})\) is not an \(\mathbf{UGVL}\)-extension of \((X, d)\).

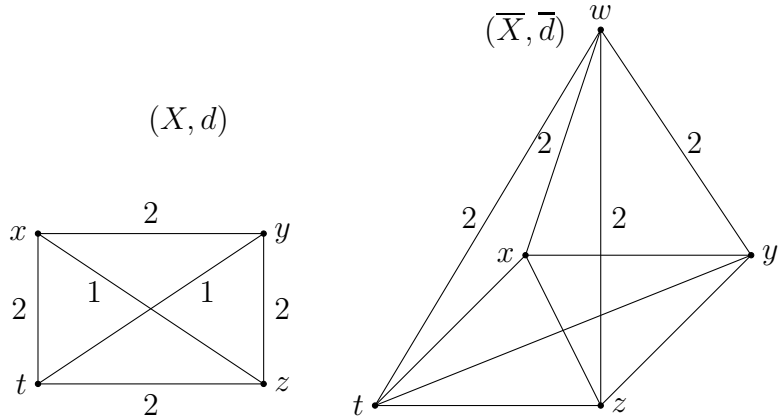
\begin{figure}[htb]
\begin{tikzpicture}[scale=1]
\coordinate [label=left: $x$] (x) at (0,2);
\coordinate [label=right: $y$] (y) at (3,2);
\coordinate [label=right: $z$] (z) at (3,0);
\coordinate [label=left: $t$] (t) at (0,0);
\draw (x) -- node [above] {\(2\)} (y) -- node [right] {\(2\)} (z) -- node [below] {\(2\)} (t) -- node [left] {\(2\)} (x);

\draw (x) -- node [near start, below] {\(1\)} (z);
\draw (y) -- node [near start, below] {\(1\)} (t);

\node at (2, 3.5) {\((X, d)\)};

\draw [fill, black] (x) circle (1pt);
\draw [fill, black] (y) circle (1pt);
\draw [fill, black] (z) circle (1pt);
\draw [fill, black] (t) circle (1pt);
\end{tikzpicture}
\quad
\begin{tikzpicture}[scale=1]
\coordinate [label=left: $x$] (x) at (0,2);
\coordinate [label=right: $y$] (y) at (3,2);
\coordinate [label=right: $z$] (z) at (1,0);
\coordinate [label=left: $t$] (t) at (-2,0);
\coordinate [label=above: $w$] (w) at (1,5);
\draw (x) -- (y) -- (z) -- (t) -- (x);

\draw (x) -- node [left] {\(2\)} (w) -- node [right] {\(2\)} (y);
\draw (t) --  node [left] {\(2\)}  (w) -- node [right] {\(2\)} (z);

\draw (x) -- (z);
\draw (y) -- (t);

\node at (0, 5) {\((\overline{X}, \overline{d})\)};

\draw [fill, black] (x) circle (1pt);
\draw [fill, black] (y) circle (1pt);
\draw [fill, black] (z) circle (1pt);
\draw [fill, black] (t) circle (1pt);
\draw [fill, black] (w) circle (1pt);
\end{tikzpicture}
\caption{The pyramid \((\overline{X}, \overline{d})\) is a minimal $\mathbf{UGVL}$-extension of the quadruple \((X, d)\).}
\label{fig3}
\end{figure}

\begin{example}\label{ex5.2}
Let \((X, d)\), \(X = \{x, y, z, t\}\), be the four-point ultrametric space depicted by Figure~\ref{fig2}. It was shown in Example~\ref{ex4.2} that \((X, d) \notin \mathbf{UGVL}\). Let us consider the five-point set \(\overline{X} = \{x, y, z, t, w\}\), and define an ultrametric \(\overline{d}\) on \(\overline{X}\) such that \(\overline{d}|_{X \times X} = d\) and \(\overline{d}(w, p) = 2\) whenever \(p \in X\) (see Figure~\ref{fig3}). It is easy to see that only \(\overline{X}\), \(\{x, z\}\) and \(\{t, y\}\) are non singleton open balls in \((\overline{X}, \overline{d})\). Since each of these sets is a centered sphere, \((\overline{X}, \overline{d}) \in \mathbf{UGVL}\) by Theorem~\ref{t4.4}.
\end{example}

\begin{example}\label{ex5.4}
Let \((X, d) \) be infinite and let $d \colon X \times X \to \mathbb{R}^+$ be discrete. Then $(X,d)$ is an \(\mathbf{UGVL}\)-extension of itself, but there is no  minimal  $\mathbf{UGVL}$-extension of \((X, d)\).
\end{example}

To construct the minimal $\mathbf{UGVL}$-extensions of finite ultrametric space we will use the Gurvich---Vyalyi representing trees. Recall the procedure for constructing such trees.

With every finite nonempty ultrametric space \((X, d)\), we can associate a labeled rooted tree \( T(X,l) \) by the following rule (see \cite{PD2014JMS, DP2019PNUAA}). The root of \( T(X,l) \) is the set \(X\). If \(X\) is a one-point set, then \( T(X,l) \) is the rooted tree consisting of one node \(X\) with the label \(0\). Let \(|X| \geqslant 2\). According to Theorem~\ref{t3.3} the diametral graph $G_{X,d}$ is complete multipartite with the parts \( X_1, \ldots, X_k \), where all  \( X_1,\ldots ,X_k \) are open balls in $(X, d)$. In this case the root of the tree \( T(X,l) \) is labeled by \( \diam X > 0 \) and, moreover, \( T(X,l) \) has the nodes \(X_1\), \ldots, \(X_k\), \(k \geqslant 2\), of the first level with the labels
\begin{equation}\label{eq5.1}
l(X_i) = \diam X_i,
\end{equation}
\(i = 1, \ldots,k\). The nodes of the first level labeled by \(0\) are leaves, and those indicated by \(\diam X_i > 0\) are internal nodes of the tree \( T(X,l) \). If the first level has no internal nodes, then the tree \( T(X,l) \) is constructed. Otherwise, by repeating the above-described procedure with \(X_i\) corresponding to the internal nodes of the first level, we obtain the nodes of the second level, etc. Since \(X\) is a finite set, all vertices on some level will be leaves, and the construction of \( T(X,l) \) is completed.

We shall say that the labeled rooted tree \( T(X,l) \) is the \emph{representing tree} of \((X, d)\).

It can be shown that for any finite ultrametric space \( (X,d) \) the vertex set of the representing tree \( T = T(X, l) \) coincides with the set of all open balls of \( (X,d) \),
\begin{equation}\label{lbpm:e0}
V(T) = \mathbf{B}_X.
\end{equation}
(See, for example, Theorem~1.6 in \cite{DP2019PNUAA}).
Using Theorem~\ref{t1.4} and equality~\eqref{lbpm:e0} we see that defined above labeling $l \colon V(T) \to \mathbb{R}^+$ generate an ultrametric on the set $\mathbf{B}_X$. It is interesting to note that this ultrametric is the Hausdorff metric $d_H$ on $ \mathbf{B}_X$,
\begin{equation*}
d_H (B_1,B_2)= \max\{ \sup\limits_{x \in B_1} d(x,B_2), \sup\limits_{x \in B_2} d(x,B_1)\},
\end{equation*}
where
\begin{equation*}
d(x,B)= \inf\limits_{b \in B} d (x,b).
\end{equation*}

\begin{proposition}\label{p5.4}
    Let \((X, d)\) be a finite nonempty ultrametric space with the Gurvich--Vyalyi representing tree $ T(X,l)$. Then $(\mathbf{B}_X, d_H)$ is an $\mathbf{UGVL}$-ultrametric space generated by labeled tree  $T(l)=T(X,l) $.
\end{proposition}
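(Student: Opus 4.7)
The plan is to verify directly that $d_H$ and $d_l$ coincide on $\mathbf{B}_X$, where $d_l$ is the ultrametric generated on $V(T(X,l)) = \mathbf{B}_X$ by the labeling described in the construction. First I would note that, thanks to \eqref{lbpm:e0} and the rule $l(B) = \diam B$, every internal node of $T(X,l)$ gets a strictly positive label (since internal nodes correspond to balls with at least two points), so by Theorem~\ref{t1.4} the function $d_l$ is indeed an ultrametric on $\mathbf{B}_X$.

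Second, I would exploit the tree structure of $T(X,l)$ together with the standard fact that in an ultrametric space two open balls are either disjoint or comparable by inclusion. If $B_1, B_2 \in \mathbf{B}_X$ are distinct, let $B$ be the smallest ball containing $B_1 \cup B_2$, which by construction is the least common ancestor of $B_1$ and $B_2$ in $T(X,l)$. Since diameters decrease strictly as one moves from a node to its children (each child of $B$ is an open ball in $(B, d|_{B\times B})$ of radius $\diam B$, hence of diameter less than $\diam B$ by Lemma~\ref{l3.1} and the strong triangle inequality), the maximum of the labels along the unique path from $B_1$ to $B_2$ is attained at $B$, so
\[
d_l(B_1, B_2) = \diam B.
\]

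Third, I would show $d_H(B_1, B_2) = \diam B$ by cases. In the nested case, say $B_1 \subsetneq B_2$, one side of the Hausdorff supremum vanishes; for the other, fix a center $c_1 \in B_1$ and use Proposition~\ref{p3.4} to conclude $d(y, B_1) = d(y, c_1)$ for every $y \in B_2 \setminus B_1$. Picking $u, v \in B_2$ with $d(u, v) = \diam B_2$ and applying the isoceles property of ultrametric triangles, one of $d(u, c_1), d(v, c_1)$ equals $\diam B_2$, and since this value exceeds $\diam B_1$ the corresponding point must lie outside $B_1$; hence $d_H(B_1, B_2) = \diam B_2 = \diam B$. In the disjoint case, let $B_1'$ and $B_2'$ be the distinct children of $B$ containing $B_1$ and $B_2$ respectively; by Theorem~\ref{t3.3}, $B_1'$ and $B_2'$ are parts of the diametrical graph $G_{B, d|_{B \times B}}$, so $d(x, y) = \diam B$ for every $x \in B_1 \subseteq B_1'$ and $y \in B_2 \subseteq B_2'$, giving $d_H(B_1, B_2) = \diam B$ directly.

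Combining the two displays yields $d_H = d_l$ on $\mathbf{B}_X \times \mathbf{B}_X$, so $(\mathbf{B}_X, d_H)$ is generated by $T(X, l)$ and therefore belongs to $\mathbf{UGVL}$. The main technical point — and the one requiring the most care — is the nested case of the Hausdorff computation, where one has to identify $\sup_{y \in B_2} d(y, B_1)$ with $\diam B_2$; this hinges on the strict inequality $\diam B_1 < \diam B_2$ forcing the extremal point $u$ (or $v$) out of $B_1$. Everything else is an orchestration of Proposition~\ref{p3.4}, Theorem~\ref{t3.3} and the tree-theoretic description of $d_l$.
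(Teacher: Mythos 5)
Your proof is correct. Note, however, that the paper does not actually prove Proposition~\ref{p5.4} in the text: it simply refers the reader to Theorem~2.5 of \cite{Dov2019}. So what you have produced is a genuinely self-contained argument where the paper offers only a citation, and the route you take is the natural one: identify both $d_l(B_1,B_2)$ and $d_H(B_1,B_2)$ with $\diam B$, where $B$ is the smallest ball containing $B_1\cup B_2$, i.e.\ the least common ancestor in $T(X,l)$. All three steps check out. For $d_l$, the strict decrease of labels from a node to its children holds because the children of $B$ are the parts of the diametrical graph $G_{B,d|_{B\times B}}$, so all distances inside a part are strictly below $\diam B$ and finiteness turns the supremum into a maximum; your citation of Lemma~\ref{l3.1} here is a slight misattribution (that lemma concerns centered spheres), but the fact is immediate from the strong triangle inequality, so nothing is lost. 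In the nested case you implicitly use that $B_1\subsetneq B_2$ forces $\diam B_1<\diam B_2$; this is true and follows from the representation $B=\{x\in X\colon d(x,c)\leqslant \diam B\}$ valid for finite spaces (the same observation as in the proof of statement \ref{c3.11:s1} of Proposition~\ref{c3.11}), and it would be worth one explicit sentence. The disjoint case, via Theorem~\ref{t3.3} and the fact that $B_1$ and $B_2$ sit in distinct parts of $G_{B,d|_{B\times B}}$ by minimality of $B$, is exactly right. In short: the argument is complete and correct, and it supplies a proof the paper outsources.
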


For the proof see Theorem~2.5 in \cite{Dov2019}.

The next characterization of finite \textbf{UGVL}-spaces can be considered as a reformulation of Theorem~4.1 from \cite{Pet}.

\begin{theorem}\label{t5.5}
Let \((X, d)\) be a finite nonempty ultrametric space. Then \( (X,d) \in \mathbf{UGVL}\) if and only if every internal node of the representing tree  \(T(X,l) \) has at least one direct successor which is a leaf of \(T(X,l)\).
\end{theorem}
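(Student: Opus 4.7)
The plan is to reduce the statement to Theorem~\ref{t4.4} and then translate the condition $\mathbf{B}_{X,d} \subseteq \mathbf{Cs}_{X,d}$ into a combinatorial property of the representing tree $T(X,l)$ via Corollary~\ref{c3.3}. By Theorem~\ref{t4.4}, membership $(X,d) \in \mathbf{UGVL}$ is equivalent to $\mathbf{B}_{X,d} \subseteq \mathbf{Cs}_{X,d}$, so it suffices to show that the latter inclusion is equivalent to the leaf-direct-successor condition on $T(X,l)$.

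First I would record the structure of $T(X,l)$ that I need: by equality~\eqref{lbpm:e0}, the vertex set of $T(X,l)$ is exactly $\mathbf{B}_X$. By the construction of the representing tree, a vertex $B$ is a leaf precisely when $l(B)=0$, i.e., when $\diam B = 0$, which (by the finiteness of $B$) is equivalent to $\card B = 1$. Thus the internal nodes are precisely the open balls $B \in \mathbf{B}_{X,d}$ with $\card B \geqslant 2$. Again by construction, for such an internal node $B$, the set of direct successors of $B$ in $T(X,l)$ is exactly the set of parts $\{X_1,\ldots,X_k\}$ of the diametrical graph $G_{B,\, d|_{B\times B}}$ (these are the open balls of $(B, d|_{B\times B})$ of radius $\diam B$, by Theorem~\ref{t3.3}).

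Next I would apply Corollary~\ref{c3.3} to the ultrametric space $(B, d|_{B\times B})$ for every internal node $B$. It yields the equivalence: $B \in \mathbf{Cs}_{B,\, d|_{B\times B}}$ if and only if at least one part of $G_{B,\, d|_{B\times B}}$ is a singleton. Combining this with the observation that a direct successor of $B$ is a leaf iff it has cardinality one, we obtain
\begin{equation*}
B \in \mathbf{Cs}_{B,\, d|_{B\times B}} \Longleftrightarrow \text{$B$ has a direct successor in $T(X,l)$ that is a leaf.}
\end{equation*}
Using that $B \in \mathbf{B}_{X,d}$ is an open ball in $X$, the definition of centered sphere together with Proposition~\ref{p3.4} shows that $B \in \mathbf{Cs}_{B,\, d|_{B \times B}}$ is equivalent to $B \in \mathbf{Cs}_{X,d}$ (the nontrivial direction uses that if $x \in X$ and $d(x,c)=r<\diam B$ for the center $c$ of the ball $B$, then $x \in B$). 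Finally, every leaf of $T(X,l)$ is a singleton and hence trivially belongs to $\mathbf{Cs}_{X,d}$ (with $r=0$). Hence the inclusion $\mathbf{B}_{X,d} \subseteq \mathbf{Cs}_{X,d}$ reduces exactly to the condition that every internal node of $T(X,l)$ has at least one leaf among its direct successors, and the theorem follows by invoking Theorem~\ref{t4.4}.

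The main obstacle is not conceptual but bookkeeping: one has to be careful to match the two different viewpoints on a ball $B$ — as a subset of $X$ (where centered-sphere-ness is defined in $(X,d)$) and as its own ultrametric space $(B, d|_{B\times B})$ — and verify that the two notions of being a centered sphere coincide. Once Corollaries~\ref{c2.4} and~\ref{c3.10} together with Proposition~\ref{p3.4} are used to identify these, the rest is a direct dictionary between the tree-theoretic language (internal node, direct successor, leaf) and the metric language (open ball with $\card B\geqslant 2$, part of the diametrical graph, singleton ball).
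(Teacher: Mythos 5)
Your proposal is correct and takes essentially the same route as the paper: reduce to Theorem~\ref{t4.4} and then translate \(\mathbf{B}_{X,d}\subseteq\mathbf{Cs}_{X,d}\) into the leaf condition via \(V(T(X,l))=\mathbf{B}_X\) (equality~\eqref{lbpm:e0}) and Corollary~\ref{c3.3} applied to each \((B,d|_{B\times B})\), which is precisely the content of Proposition~\ref{p5.7} that the paper uses to equate Theorems~\ref{t4.4} and~\ref{t5.5}. The only blemish is the parenthetical ``\(d(x,c)=r<\diam B\)'', which should say that \(r=\diam B\) is strictly less than the radius \(s\) of the open ball \(B=B_s(c)\); the identification \(\mathbf{Cs}_{B,d|_{B\times B}}\ni B\Rightarrow B\in\mathbf{Cs}_{X,d}\) is exactly Corollary~\ref{c3.10}, which you already cite.
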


\begin{figure}[ht]
\begin{center}
\begin{tikzpicture}[scale=0.7]
\coordinate [label=left: $x$] (x) at (0,2);
\coordinate [label=right: $y$] (y) at (3,2);
\coordinate [label=right: $z$] (z) at (3,0);
\coordinate [label=left: $t$] (t) at (0,0);
\draw (x) -- node [above] {\(2\)} (y) -- node [right] {\(2\)} (z) -- node [below] {\(2\)} (t) -- node [left] {\(2\)} (x);

\draw (x) -- node [near start, below] {\(1\)} (z);
\draw (y) -- node [near start, below] {\(1\)} (t);

\node at (2, 3.5) {\((X, d)\)};

\draw [fill, black] (x) circle (1pt);
\draw [fill, black] (y) circle (1pt);
\draw [fill, black] (z) circle (1pt);
\draw [fill, black] (t) circle (1pt);
\end{tikzpicture}
\quad
\begin{tikzpicture}[
level 1/.style={level distance=\levdist,sibling distance=4cm},
level 2/.style={level distance=\levdist,sibling distance=2cm},
level 3/.style={level distance=\levdist,sibling distance=1cm},
hollow node/.style={circle,draw,inner sep=1.5}]
    \node [hollow node, label={[label distance=1cm] left:\(T_X\)}] {2}
        child {
        node [hollow node] {1}
        child { node [circle,draw,inner sep=1,fill=black, label=below:{$\{x\}$} ] {} }
        child { node [circle,draw,inner sep=1,fill=black, label=below:{$\{z\}$} ] {} }
    }
    child {
        node [hollow node] {1}
        child { node [circle,draw,inner sep=1,fill=black, label=below:{$\{y\}$} ] {} }
        child { node [circle,draw,inner sep=1,fill=black, label=below:{$\{t\}$} ] {} }
 };
\end{tikzpicture}
\end{center}
\end{figure}

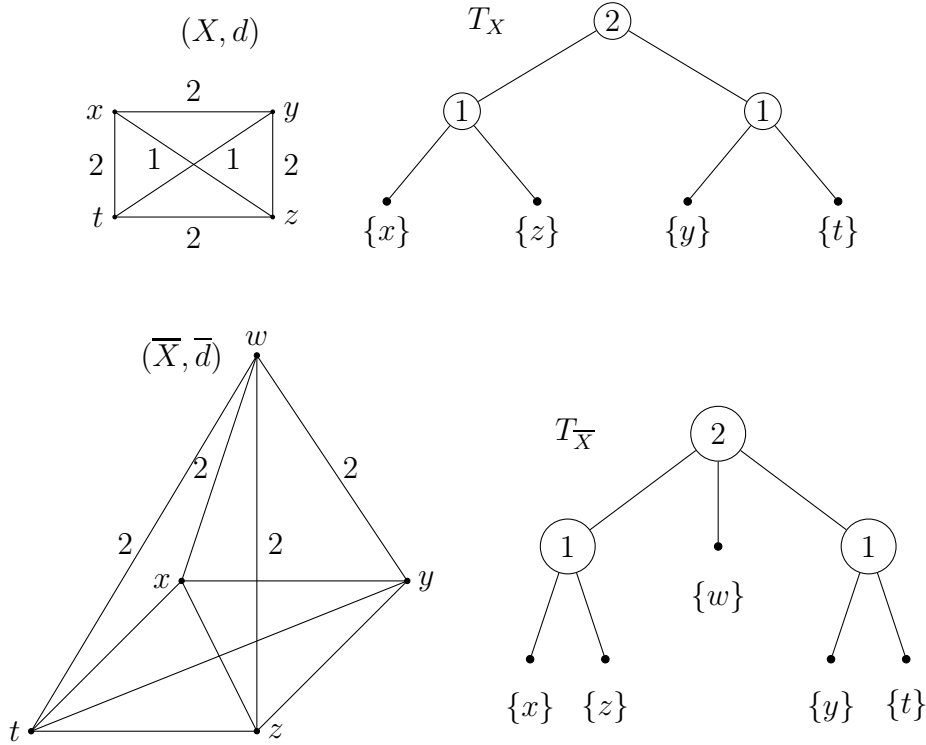
\begin{figure}[ht]
\begin{tikzpicture}[scale=1]
\coordinate [label=left: $x$] (x) at (0,2);
\coordinate [label=right: $y$] (y) at (3,2);
\coordinate [label=right: $z$] (z) at (1,0);
\coordinate [label=left: $t$] (t) at (-2,0);
\coordinate [label=above: $w$] (w) at (1,5);
\draw (x) -- (y) -- (z) -- (t) -- (x);

\draw (x) -- node [left] {\(2\)} (w) -- node [right] {\(2\)} (y);
\draw (t) --  node [left] {\(2\)}  (w) -- node [right] {\(2\)} (z);

\draw (x) -- (z);
\draw (y) -- (t);

\node at (0, 5) {\((\overline{X}, \overline{d})\)};

\draw [fill, black] (x) circle (1pt);
\draw [fill, black] (y) circle (1pt);
\draw [fill, black] (z) circle (1pt);
\draw [fill, black] (t) circle (1pt);
\draw [fill, black] (w) circle (1pt);
\end{tikzpicture}
\quad
\begin{tikzpicture}[
level distance=1.5cm,
level 1/.style={sibling distance=2cm},
level 2/.style={sibling distance=1cm} ]
\tikzstyle{every node}=[circle,draw]
    \node (Root) [label={[label distance=1cm] left:\(T_{\overline{X}}\)}] {2}
        child {
        node {1}
       child { node [circle,draw,inner sep=1,fill=black, label=below:{$\{x\}$} ] {} }
        child{ node [circle,draw,inner sep=1,fill=black, label=below:{$\{z\}$} ] {} }
    }
    child {
        node [inner sep=1, fill=black, label=below:{$\{w\}$} ] {}
    }
    child {
        node {1}
        child { node [circle,draw,inner sep=1,fill=black, label=below:{$\{y\}$} ] {} }
        child { node [circle,draw,inner sep=1,fill=black, label=below:{$\{t\}$} ] {} }
 };
\end{tikzpicture}
\begin{center}
\caption{The representing trees of the quadruple $(X,d)$ and the pyramid $(\overline{X}, \overline{d})$. The singleton $\{w\}$ is a leaf of the root of ${T_{\overline{X}}}$.}
\end{center}
\label{fig4}
\end{figure}

The next proposition and equality~\eqref{lbpm:e0} show that Theorems~\ref{t4.4} and \ref{t5.5} are really equivalent.

\begin{proposition}\label{p5.7}
  Let \((X, d)\) be a finite ultrametric space, $B \in \mathbf{B}_X$ be an internal node of the representing tree $T(X,l)$ and let $c$ be a point of $B$. Then the ball $B$ is a centered sphere with a center $c$ if and only if the singleton $\{c\}$ is a leaf of $T(X,l)$ and, simultaneously, a direct successor of the internal node $B$.
\end{proposition}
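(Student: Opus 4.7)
The plan is to unwind the two conditions into a common elementary statement about distances from $c$ to the other points of $B$, and then verify that both are equivalent to it.

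First, I would observe that by Definition~\ref{d1.2} combined with Lemma~\ref{l3.1}, the ball $B$ is a centered sphere in $(X,d)$ with center $c$ if and only if
\[
B = \{x \in X \colon d(x,c) = \diam B\} \cup \{c\},
\]
which, since $B \subseteq X$ and $c \in B$, is equivalent to the distance equation
\[
d(x,c) = \diam B \qquad \text{for every } x \in B \setminus \{c\}. \tag{$\ast$}
\]

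Next I would analyze what it means for $\{c\}$ to be a direct successor of the node $B$ in the representing tree $T(X,l)$. By construction of $T(X,l)$, the direct successors of the internal node $B$ are precisely the parts of the diametrical graph $G_{B,d|_{B\times B}}$ of the ultrametric space $(B,d|_{B\times B})$. Applying Lemma~\ref{l3.3} to this space, its parts are exactly the open balls of radius $\diam B$, i.e.\ the sets $B_{\diam B}(a) \cap B$ for $a \in B$ (and by Proposition~\ref{p3.4} each such part is the ball centered at any of its own points). Consequently, $\{c\}$ is a direct successor of $B$ precisely when $\{c\}$ is an open ball of radius $\diam B$ in $(B,d|_{B\times B})$, namely when $d(x,c) \geqslant \diam B$ for every $x \in B\setminus\{c\}$; since $d(x,c) \leqslant \diam B$ is automatic from the definition of diameter, this is the same as condition $(\ast)$.

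Combining the two equivalences shows that $B$ is a centered sphere with center $c$ if and only if $\{c\}$ is a direct successor of $B$ in $T(X,l)$. To close the statement it remains to note that singletons are always leaves of the representing tree: by construction, a one-point node $\{c\}$ receives label $0$ and has no further children, so ``$\{c\}$ is a leaf'' is automatic once $\{c\}$ is a vertex of $T(X,l)$ at all, and being a direct successor of $B$ ensures exactly that. I do not anticipate a real obstacle here; the whole argument is bookkeeping against Definition~\ref{d1.2}, Lemma~\ref{l3.1}, Lemma~\ref{l3.3} and the defining recursion of the Gurvich--Vyalyi tree.
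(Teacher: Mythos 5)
Your proof is correct and follows essentially the same route as the paper's, which is just a one-line appeal to the construction of $T(X,l)$ and Corollary~\ref{c3.3} applied to $Y=B$; you are in effect supplying the details that the paper omits (reducing both sides to the condition $d(x,c)=\diam B$ for all $x\in B\setminus\{c\}$, via Lemma~\ref{l3.1} on one side and Theorem~\ref{t3.3} on the other). The only step that deserves an explicit word is the reverse inclusion in your first equivalence: if $(\ast)$ holds you still need $\{x\in X\colon d(x,c)=\diam B\}\subseteq B$, which follows because $B=B_r(c)$ for some $r$ by Proposition~\ref{p3.4} and the strong triangle inequality gives $\diam B<r$, so every $x\in X\setminus B$ satisfies $d(x,c)\geqslant r>\diam B$.
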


\begin{proof}
The validity of this assertion is checked using the above procedure for constructing $T(X,l)$ and Corollary~\ref{c3.3} with $Y=B$ and $\rho=d_{|B\times B}$.
\end{proof}

The proofs of the following two theorems can be found in \cite{DP2018pNUAA}.

\begin{theorem}\label{t1.10_from7}
Let \( (X, d) \) and \( (Y, \rho) \) be nonempty finite ultrametric spaces. Then the representing trees of these spaces are isomorphic as labeled rooted trees if and only if \( (X, d) \) and \( (Y, \rho) \) are isometric.
\end{theorem}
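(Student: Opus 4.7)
The plan is to prove the two implications separately. The forward direction is the more routine one: given an isometry \(\Phi \colon X \to Y\), I would proceed by induction on \(\card X\). The base case \(\card X = 1\) is trivial, since both representing trees consist of a single node labeled \(0\). For the inductive step, note that isometries preserve diameters and map open balls bijectively onto open balls of equal diameter; in particular \(\diam X = \diam Y\), so the roots of \(T(X, l)\) and \(T(Y, l')\) receive matching labels. By Theorem~\ref{t3.3} the first-level children of the root in \(T(X, l)\) are precisely the parts of the diametrical graph \(G_{X, d}\), and \(\Phi\) maps these bijectively onto the parts of \(G_{Y, \rho}\). Applying the induction hypothesis to each such part, viewed as an ultrametric subspace, I would glue the resulting subtree isomorphisms at the root to obtain an isomorphism of the labeled rooted trees \(T(X, l)\) and \(T(Y, l')\).

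For the backward direction, suppose \(f \colon V(T(X, l)) \to V(T(Y, l'))\) is an isomorphism of labeled rooted trees. From the recursive construction of the representing trees, the leaves of \(T(X, l)\) are precisely the singletons \(\{x\}\) with \(x \in X\) (and analogously for \(T(Y, l')\)), and \(f\) restricts to a bijection between the two sets of leaves. This induces a bijection \(\varphi \colon X \to Y\) via \(\{\varphi(x)\} = f(\{x\})\). I would then show that \(\varphi\) is an isometry, which is the heart of the argument.

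The crucial step is the characterization of \(d(x_1, x_2)\) in tree-theoretic terms: for distinct \(x_1, x_2 \in X\), let \(B(x_1, x_2)\) denote the lowest common ancestor of the leaves \(\{x_1\}\) and \(\{x_2\}\) in the rooted tree \(T(X, l)\). I claim that \(B(x_1, x_2)\) is the smallest open ball of \((X, d)\) containing both points, and that
\[
d(x_1, x_2) = l(B(x_1, x_2)).
\]
This identification is the main obstacle and I would verify it by induction on the depth of \(B(x_1, x_2)\). Since the direct successors of an internal node \(B\) of \(T(X, l)\) are exactly the parts of the diametrical graph \(G_{B, d|_{B \times B}}\), and since \(x_1, x_2\) lie in distinct direct successors of \(B(x_1, x_2)\), Theorem~\ref{t3.3} applied to \((B(x_1, x_2), d|_{B(x_1, x_2) \times B(x_1, x_2)})\) forces \(d(x_1, x_2) = \diam B(x_1, x_2) = l(B(x_1, x_2))\). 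Because \(f\) is an isomorphism of rooted trees, it preserves lowest common ancestors, and because it preserves labels, we conclude
\[
\rho(\varphi(x_1), \varphi(x_2)) = l'\bigl(f(B(x_1, x_2))\bigr) = l(B(x_1, x_2)) = d(x_1, x_2),
\]
so \(\varphi\) is an isometry and the proof is complete.
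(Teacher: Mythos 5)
The paper does not prove this theorem at all: it only cites \cite{DP2018pNUAA}, so there is no in-paper argument to compare against. Your proof is correct and is essentially the standard one from that reference: the forward direction by induction on \(\card X\) using the diametral decomposition, and the backward direction via the bijection between leaves and singletons together with the identity \(d(x_1,x_2)=l(B(x_1,x_2))\) for the lowest common ancestor, which indeed follows directly from Theorem~\ref{t3.3} (no induction on depth is needed, since \(x_1\) and \(x_2\) necessarily lie in distinct parts of the complete multipartite diametrical graph of \(B(x_1,x_2)\), hence realize its diameter).
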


\begin{theorem}\label{lem1.11_from7}
Let \(T = T(r, l)\) be a finite labeled rooted tree with the root \(r\) and the labeling \(l \colon V (T) \to \mathbb{R}^{+} \). Then the following two conditions are equivalent.
\begin{enumerate}
\item For every \(u \in V (T)\) we have \( \delta^{+} (u) \neq 1\) and
$$
(\delta^{+}(u) = 0) \Leftrightarrow (l(u) = 0)
$$
and, in addition, the inequality
\begin{equation}
l(v) < l(u)
\end{equation}
holds whenever v is a direct successor of \(u\).
\item There is a finite ultrametric space \((X, d)\) such that the representing tree of \( (X,d)\) and \(T\) are isomorphic as labeled rooted trees.
\end{enumerate}
\end{theorem}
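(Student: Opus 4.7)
For the direction (ii) $\Rightarrow$ (i), I would give a direct verification from the Gurvich--Vyalyi construction recalled before Proposition~\ref{p5.4}: if $u$ is an internal node of the representing tree of $(X,d)$, then $u$ is an open ball $B$ with $\card B \geqslant 2$, and its direct successors are the parts of the complete multipartite diametrical graph $G_{B, d|_{B \times B}}$, of which there are at least two by Theorem~\ref{t3.1}; leaves correspond to singletons, of diameter $0$; and labels strictly decrease along parent--child edges because open subballs have strictly smaller diameter than their ambient ball.

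For the nontrivial direction (i) $\Rightarrow$ (ii), the plan is the natural leaf-distance construction. I would take $X$ to be the set of leaves of $T$ and, for distinct $x, y \in X$, define $d(x,y) := l(u_{xy})$, where $u_{xy}$ denotes the lowest common ancestor of $x$ and $y$ in $T$; put $d(x,x) = 0$. First I would check that $d$ is an ultrametric on $X$: positivity for distinct points follows because $u_{xy}$ is internal, so $l(u_{xy}) > 0$ by the equivalence $(\delta^+(u) = 0) \Leftrightarrow (l(u) = 0)$; the strong triangle inequality follows from the standard observation that among the three pairwise LCAs of distinct $x, y, z$ two coincide and the third is their (weak) ancestor, combined with strict decrease of labels along ancestor chains.

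The heart of the argument is to identify the representing tree $T(X, l')$ of $(X,d)$ with $T$. For each $u \in V(T)$, let $L(u)$ denote the set of leaves of $T$ lying in the subtree rooted at $u$. I would show that $u \mapsto L(u)$ is a bijection $V(T) \to \mathbf{B}_{X,d}$ preserving the root, the parent--child relation, and labels. The description of balls is a routine calculation: if $x$ is a leaf of $T$ with ancestor chain $x = u_0, u_1, \ldots, u_k$ up to the root, then $B_r(x) = \{x\} = L(u_0)$ for $0 < r \leqslant l(u_1)$, $B_r(x) = L(u_j)$ for $l(u_j) < r \leqslant l(u_{j+1})$ with $1 \leqslant j < k$, and $B_r(x) = X = L(u_k)$ for $r > l(u_k)$. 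This yields surjectivity onto $\mathbf{B}_{X,d}$, and the identity $\diam L(u) = l(u)$ follows because, using $\delta^+(u) \geqslant 2$ for internal $u$, the maximum of $l(u_{xy})$ over $x, y \in L(u)$ is attained when $u_{xy} = u$. Matching of the parent--child relation then follows from Theorem~\ref{t3.3}, which identifies the direct successors of $L(u)$ in the representing tree with the parts of $G_{L(u), d|_{L(u) \times L(u)}}$.

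The hard part will be the injectivity of $u \mapsto L(u)$, and this is exactly where the hypothesis $\delta^+(u) \neq 1$ is decisive: a node $u$ with a unique direct successor $v$ would give $L(u) = L(v)$ and the two nodes would collapse in the representing tree. For comparable $u, v$ in $T$, the inequality $\delta^+(w) \geqslant 2$ at each internal $w$ furnishes a sibling subtree contributing leaves that belong to the larger of $L(u), L(v)$ but not to the smaller; for incomparable $u, v$ the leaf sets $L(u)$ and $L(v)$ are disjoint and nonempty. With injectivity in hand, the bijection $u \mapsto L(u)$ together with $l'(L(u)) = \diam L(u) = l(u)$ and the matching of edges produces the desired labeled rooted tree isomorphism between $T$ and $T(X, l')$, completing (i) $\Rightarrow$ (ii).
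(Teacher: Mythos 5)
The paper does not actually prove Theorem~\ref{lem1.11_from7}: it defers both this statement and Theorem~\ref{t1.10_from7} to the cited reference \cite{DP2018pNUAA}, so there is no in-paper argument to compare yours against. Judged on its own, your proposal is essentially correct and is the natural self-contained proof. The direction (ii) $\Rightarrow$ (i) is indeed a direct reading of the Gurvich--Vyalyi construction (at least two parts by Definition~\ref{d2.5}, labels of parts equal to their diameters, which are strictly smaller than the diameter of the ambient ball). For (i) $\Rightarrow$ (ii), the leaf set with $d(x,y)=l(u_{xy})$ is the right construction, your description of the balls $B_r(x)$ along the ancestor chain is correct, the identity $\diam L(u)=l(u)$ does use $\delta^+(u)\geqslant 2$ exactly as you say, and injectivity of $u\mapsto L(u)$ is precisely where $\delta^+(u)\neq 1$ enters. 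One slip to fix: the ``standard observation'' about the three pairwise lowest common ancestors of distinct $x,y,z$ is stated backwards --- two of them coincide and the third is a weak \emph{descendant} (not ancestor) of the coinciding pair, since the coinciding pair is the higher node. With that correction the strict decrease of labels along edges gives the isosceles property and hence the strong triangle inequality, and the rest of your argument (matching parent--child relations via Theorem~\ref{t3.3} and labels via $l'(L(u))=\diam L(u)=l(u)$) goes through.
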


For every finite nonempty ultrametric space we denote by \( \Delta(X, d)\) the number of open balls which are not centered spheres in \((X, d)\),
\begin{equation}\label{ex5.3:e1}
\Delta(X, d) := \card \left(\mathbf{B}_{X, d} \setminus \mathbf{Cs}_{X, d}\right).
\end{equation}

The following theorem is the second main result of the paper.

\begin{theorem}\label{t5.4}
Let \((X, d)\) be a finite nonempty ultrametric space. Then the following statements hold:
\begin{enumerate}
\item Every \(\mathbf{UGVL}\)-extension \((Y, \rho)\) of \((X, d)\) satisfies the inequality
\begin{equation}\label{t5.4:e1}
\card Y \geqslant \Delta(X, d) + \card X.
\end{equation}
\item An \(\mathbf{UGVL}\)-extension \((Y, \rho)\) of \((X, d)\) is minimal if and only if the equality
\begin{equation}\label{t5.4:e2}
\card Y = \Delta(X, d) + \card X
\end{equation}
holds.
\item All minimal \(\mathbf{UGVL}\)-extensions of \((X, d)\) are isometric.
\end{enumerate}
\end{theorem}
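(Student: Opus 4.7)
The plan is to first exhibit a concrete $\mathbf{UGVL}$-extension of size $|X|+\Delta(X,d)$ via surgery on the Gurvich--Vyalyi representing tree; then prove the lower bound~(i) by an injection $\mathbf{B}_X\setminus\mathbf{Cs}_X\hookrightarrow Y\setminus X$; and finally show that the distances of the newly produced centers are already dictated by $(X,d)$, which collapses the remaining parts of~(ii) and~(iii) to an immediate comparison with the concrete extension.

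For the \emph{construction}, I start from $T(X,l)$ and attach to each internal node $B\in\mathbf{B}_X\setminus\mathbf{Cs}_X$ a new leaf labelled $0$, leaving the rest of the tree untouched; by Proposition~\ref{p5.7} these are exactly the internal nodes that lack a singleton direct successor. The resulting labeled rooted tree $T'$ satisfies the hypotheses of Theorem~\ref{lem1.11_from7}, so it is the representing tree of a finite ultrametric space $(\overline X,\overline d)$, which lies in $\mathbf{UGVL}$ by Theorem~\ref{t5.5}. The old leaves of $T'$ form an isometric copy of $X$ (the meet of two such leaves is the same in $T'$ as in $T(X,l)$), and $|\overline X|=|X|+\Delta(X,d)$.

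For the \emph{lower bound}, let $(Y,\rho)$ be any $\mathbf{UGVL}$-extension, identified with an overspace of $X$. For every $B\in\mathbf{B}_X\setminus\mathbf{Cs}_X$ I pick $c\in B$ and let $\tilde B:=\{y\in Y:\rho(y,c)\leqslant\diam B\}$, which is an open ball of $Y$ (radius slightly above $\diam B$) with $\tilde B\cap X=B$ and $\diam\tilde B=\diam B$. Theorem~\ref{t4.4} forces $\tilde B$ to be a centered sphere with some center $c_B^{\ast}$; if $c_B^{\ast}\in B$, then $B$ itself would be a centered sphere in $X$, so $c_B^{\ast}\in Y\setminus X$. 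The map $B\mapsto c_B^{\ast}$ is injective: if $c_{B_1}^{\ast}=c_{B_2}^{\ast}$ with $\diam B_1\leqslant\diam B_2$, then $\tilde B_1\subseteq\tilde B_2$ (two ultrametric balls sharing a point are nested, by Proposition~\ref{p3.4}), so any $y\in B_1\subseteq\tilde B_1\setminus\{c_{B_1}^{\ast}\}$ has $\rho(y,c_{B_1}^{\ast})$ equal to both $\diam B_1$ and $\diam B_2$, forcing $\tilde B_1=\tilde B_2$ and hence $B_1=B_2$. This yields $|Y|\geqslant|X|+\Delta(X,d)$.

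For the remaining assertions, the key observation is that the distances among the new centers are determined by $(X,d)$ alone, independently of $Y$: for $z\in X$, the strong triangle inequality (combined with $\rho(c_B^{\ast},c)=\diam B$ for $c\in B$) gives $\rho(c_B^{\ast},z)=\diam B$ when $z\in B$ and $\rho(c_B^{\ast},z)=d(c,z)$ when $z\notin B$ (the latter being constant in $c\in B$); and for distinct $B,B'\in\mathbf{B}_X\setminus\mathbf{Cs}_X$, a short case analysis on nested vs.\ disjoint $B,B'$ gives $\rho(c_B^{\ast},c_{B'}^{\ast})=\diam B''$, where $B''$ is the smallest ball of $X$ containing $B\cup B'$. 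Consequently the subspace $Y':=X\cup\{c_B^{\ast}:B\in\mathbf{B}_X\setminus\mathbf{Cs}_X\}\subseteq Y$ is isometric to $\overline X$, and in particular is itself an $\mathbf{UGVL}$-extension of $X$. If $|Y|=|X|+\Delta(X,d)$ then $Y=Y'$ is isometric to $\overline X$, proving~(iii); if $Y$ is minimal then $Y'\subseteq Y$ being an $\mathbf{UGVL}$-extension forces $Y=Y'$, which settles the ``only if'' direction of~(ii), while the ``if'' direction is immediate from~(i). The main obstacle will be the distance-forcing claim, particularly the disjoint case, where one squeezes $\rho(c_B^{\ast},c_{B'}^{\ast})$ to $\diam B''$ from both sides by chaining strong triangle inequalities through the two centers and suitable points of $X$.
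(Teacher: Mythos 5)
Your strategy for statement (i) is essentially the paper's own: your $\tilde B$ is exactly the paper's map $F(B)=\{y\in Y\colon\rho(y,b)\leqslant\diam B\}$, and the center $c_B^{\ast}$ is forced into $Y\setminus X$ for the same reason. For the uniqueness part your route is genuinely different and more direct: the paper instead proves $\diam(B\cap X)=\diam B$ for every internal ball of $Y$ by a minimality argument and then compares representing trees via Theorems~\ref{t1.10_from7} and~\ref{lem1.11_from7}, whereas you pin down all distances involving the new centers by the isosceles property and identify $Y'=X\cup\{c_B^{\ast}\}$ with the explicit model $\overline X$ outright. Those distance computations (including the disjoint-ball case you flag as the main obstacle) are correct.

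There is, however, a genuine gap in the ``minimal $\Rightarrow$ equality'' direction of (ii), which also undermines (iii) as you derive it: you never show that a minimal $\mathbf{UGVL}$-extension must be finite. An $\mathbf{UGVL}$-space need not be finite or even locally finite (a star with center labelled $0$ and infinitely many leaves labelled $1$ generates an infinite discrete ultrametric space), so a priori a minimal extension $(Y,\rho)$ could be infinite. Your construction of $Y'$ cannot be run there: for infinite $Y$ the set $\tilde B=\{y\in Y\colon\rho(y,c)\leqslant\diam B\}$ need not be an open ball of $Y$ (there may be points at distance $\diam B+1/n$ from $c$ for every $n$), and Theorem~\ref{t4.4}, which you invoke to produce the center $c_B^{\ast}$, applies only to finite spaces. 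The paper closes exactly this hole with a separate idea: if $T(l)$ generates $(Y,\rho)$ with $Y\supseteq X$, the union of all paths in $T$ joining pairs of points of $X$ is a \emph{finite} subtree $T^S$, and $\bigl(V(T^S),\rho|_{V(T^S)\times V(T^S)}\bigr)$ is a finite $\mathbf{UGVL}$-sub-extension of $(X,d)$, so minimality forces $Y=V(T^S)$. You need this step (or an equivalent one) before your $Y'$ argument applies. A second, minor point: in (i) you should dispose of infinite $Y$ explicitly by observing the inequality is then trivial, since your parenthetical ``radius slightly above $\diam B$'' already presupposes finiteness of $Y$.
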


\begin{proof}
Let $(Y, \rho)$ be an $\mathbf{UGVL}$-extension of $(X,d)$ and let $\Phi \colon X\to Y$ be an isometric embedding of $(X,d)$ in $(Y, \rho)$.
Write
$$
Y_1 \colon= \Phi(X) \ \textrm{ and } \ \rho_1 \colon = \rho_{|Y_1 \times Y_1}.
$$
Then $(Y, \rho)$ is an $\mathbf{UGVL}$-extension of $(Y_1, \rho_1)$, the equalities $\Delta(X,d)= \Delta(Y_1, \rho_1)$ and $\card X= \card Y_1$ hold, and, moreover, $(Y,\rho)$ is minimal for $(X,d)$ iff it is minimal for $(Y_1, \rho_1)$. Thus, without loss generality we may consider only these $\mathbf{UGVL}$-extensions of the space $(X,d)$ that are superspaces of this space.

$(i)$. Let us consider an arbitrary $\mathbf{UGVL}$-extension $(Y,\rho)$  of $(X,d)$. Since $X$ is finite, inequality \eqref{t5.4:e1} evidently holds if $Y$ is infinite.

Suppose that $(Y, \rho)$ is a finite ultrametric space, and define a mapping $F \colon \mathbf{B}_{X,d} \to \mathbf{B}_{Y, \rho}$ as
\begin{equation}\label{t5.7_p:e1}
F(B) \colon= \{ y \in Y \colon \rho (y,b) \leq \diam B \}
\end{equation}
where $b$ is an arbitrary given point of $B$.

We claim that $F$ is an injective mapping. Let us prove it. As in the proof of statement $(i)$ of Proposition~\ref{c3.11} we have the equality
\begin{equation}\label{t5.7_p:e2}
B = \{ x \in X \colon d (x,b) \leq \diam B \}
\end{equation}
for all $B \in \mathbf{B}_{X,d}$ and $b \in B$. Since $(X,d)$ is a subspace of $(Y, \rho)$, \eqref{t5.7_p:e1} and \eqref{t5.7_p:e2} imply
\begin{equation}\label{t5.7_p:e3}
B \subseteq F(B)
\end{equation}
for every $B \in \mathbf{B}_{X,d}$.

Let $B_1$ and $B_2$ be open balls in $(X,d)$ such that
\begin{equation}\label{t5.7_p:e4}
F(B_1) = F(B_2).
\end{equation}
We must show that
\begin{equation}\label{t5.7_p:e5}
B_1 = B_2.
\end{equation}
Inclusion \eqref{t5.7_p:e3} and equality \eqref{t5.7_p:e4} give us
\begin{equation}\label{t5.7_p:e6}
B_2 \subseteq F(B_1) \quad \textrm{and } \ B_1 \subseteq F(B_2).
\end{equation}
Let $b_i$ be an arbitrary point of $B_i$, $i=1,\, 2$. The equality $d=\rho_{|X\times X}$, \eqref{t5.7_p:e1} and \eqref{t5.7_p:e6} imply
$$
d(b_1, b_2) \leqslant \diam B_1
$$
and
$$
d(b_1, b_2) \leqslant \diam B_2.
$$
Hence the membership relations $b_1 \in B_2$ and $b_2 \in B_1$ are valid. Since $b_i$ is an arbitrary point of $B_i$, $i = 1, 2$, it implies
$$
B_1 \subseteq B_2 \quad \textrm{and} \quad B_2 \subseteq B_1.
$$
Equality \eqref{t5.7_p:e4} follows. Thus $F \colon \mathbf{B}_{X,d} \to \mathbf{B}_{Y,\rho}$ is injective.

Let us prove the validity of inequality \eqref{t5.4:e1}.

Since $(Y, \rho)$ belongs to the class $ \mathbf{UGVL}$, Theorem~\ref{t4.4} implies that a ball $F(B)$ is a centered sphere in $(Y, \rho)$ for every $B \in \mathbf{B}_{X,d}$. If $c$ is a center of the centered sphere $F(B) \in \mathbf{Cs}_{Y,\rho}$ and
\begin{equation}\label{t5.7_p:e7}
B \in \mathbf{B}_{X,d} \setminus \mathbf{Cs}_{X,d},
\end{equation}
then $c$ is not a point of the ball $B$,
\begin{equation}\label{t5.7_p:e8}
c \notin B.
\end{equation}
Indeed, since $(X,d)$ is a subspace $(Y, \rho)$, Definition~\ref{d1.2} and the membership $c \in B$ imply that $B$ is a centered sphere in $(X,d)$
\begin{equation*}
B \in \mathbf{Cs}_{X,d},
\end{equation*}
contrary to~\eqref{t5.7_p:e7}. Thus~\eqref{t5.7_p:e8} holds. By Proposition~\ref{p5.7}, the set $\{c\}$ is a leaf of $T_Y$ and, at the same time, a direct successor of the node $F(B)$ of the representing tree $T_Y$. Since different nodes of $T_Y$ do not have common direst successors, inequa\-li\-ty\eqref{t5.4:e1} follows from the injectivity of mapping $F \colon \mathbf{B}_{X,d} \to \mathbf{B}_{Y, \rho}$.

$(ii)$. Let $(Y, \rho)$ be an $\mathbf{UGVL}$-extension of $(X,d)$ and let equa\-li\-ty~\eqref{t5.4:e2} hold. Then $Y$ is finite set. Since every proper subset $Y_0$ of the finite set $Y$ satisfies the inequality
$$
\card Y_0 < \card Y,
$$
equality\eqref{t5.4:e2} implies
$$
\card Y_0 < \Delta (X,d) + \card X.
$$
Consequently $(Y_0, \rho_{|Y_0 \times Y_0})$ is not an $\mathbf{UGVL}$-extension of $(X,d)$ by statement $(i)$. Thus if equality~\eqref{t5.4:e2} holds, then $(Y, \rho)$ is  a minimal $\mathbf{UGVL}$-extension of $(X,d)$.

Let $(Y, \rho)$ be a minimal $\mathbf{UGVL}$-extension of $(X,d)$. We will prove that $(Y, \rho)$ satisfies equality~\eqref{t5.4:e2}.

First of all prove that any minimal $\mathbf{UGVL}$-extension $W(\lambda)$ of a given finite nonempty ultrametric space $S(\delta)$ is also a finite ultrametric space. By Proposition~\ref{p4.1} it suffices to consider the case
\begin{equation}\label{t5.7_p:e9}
\card S \geqslant 4.
\end{equation}
Let $T=T(l)$ be a labeled tree generating the space $(W, \lambda)$. As was noted in the first part of the proof we may also suppose that $W \supseteq S$. Hence $S$ is a subset of the vertex set $V(T)$. The union of all paths connecting in $T$ different points of $S$ is a finite subtree $T^S$ of $T$. It should be noted that this union is nonempty due to~\eqref{t5.7_p:e9}. Let us define a labeling $l^S \colon V (T^S) \to \mathbb{R}^+$ as $l^S = l_{|V(T^S)}$. Then the ultrametric space $(V(T^S), \delta^S)$, with
$$
\delta^S= \lambda_{|V(T^S) \times V(T^S)},
$$
is a superspace for $(S,\delta)$, and a finite subspace of $(W,\lambda)$, and, moreover, $(V(T^S), \delta^S)$ is generated by labeled tree $T^S(l^S)$. Consequently $(V(T^S), \delta^S) \in \mathbf{UGVL}$ holds, that implies
$$
(V(T^S), \delta^S) = (W, \lambda)
$$
due to the minimality of $(W, \lambda)$.

Thus $(Y,\rho)$ is a finite ultrametric space.

Let $\mathbf{B}^I_{X,d}$ and $\mathbf{B}^I_{Y,\rho}$ be defined as
\begin{equation}\label{t5.7_p:e10}
\mathbf{B}^I_{X,d} \colon = \{ B \in \mathbf{B}_{X,d} \colon \diam B>0 \},
\end{equation}
and
\begin{equation}\label{t5.7_p:e11}
\mathbf{B}^I_{Y,\rho} \colon = \{ B \in \mathbf{B}_{Y,\rho} \colon \diam B>0 \}.
\end{equation}
We claim that the equality
\begin{equation}\label{t5.7_p:e12}
\diam (B \cap X) = \diam B
\end{equation}
holds for every $B \in \mathbf{B}^I_{Y, \rho}$. For the case  $B=Y$, equality~\eqref{t5.7_p:e12} can be written as
\begin{equation}\label{t5.7_p:e13}
\diam (Y \cap X) = \diam Y.
\end{equation}
Suppose, on the contrary, that
$$
\diam (Y \cap X) <  \diam Y.
$$
Then, by Theorem~\ref{t3.3}, there is an open ball $B_r(y_0) \in \mathbf{B}_{Y, \rho}$ with $r=\diam Y$ such that
\begin{equation}\label{t5.7_p:e15}
X \subseteq B_r(y_0).
\end{equation}
The diametrical graph $G_{Y, \rho}$ is complete multipartite, hence the set $Y \setminus B_r(y_0)$ is nonvoid,
\begin{equation}\label{t5.7_p:e16}
\card (Y \setminus B_r(y_0)) >0.
\end{equation}
The Gurvich--Vyalyi representing tree $T_{B_r(y_0)}$ is a subtree of the representing tree $T_Y$ with the vertex set $V(T_{B_r(y_0)})$  consisting of all successor of the vertex $B_r(y_0)$ in $T_Y$. By Theorem~\ref{t5.5} the ultrametric space $\left( B_{r(y_0)}, \rho_{|B_r(y_0) \times B_r(y_0)}  \right)$ belongs to the class $\mathbf{UGVL}$. Hence it is an $\mathbf{UGVL}$-extension of $(X,d)$ by \eqref{t5.7_p:e15}. Now inequality \eqref{t5.7_p:e16} shows that $(Y, \rho)$ is not a minimal $\mathbf{UGVL}$-extension of $(X,d)$ contrary to the supposition. Thus \eqref{t5.7_p:e13} holds.

Let $B_1^0, B_2^0, \ldots, B_n^0$ be the paths of diametrical graph $G_{Y, \rho}$. Suppose that $B_1^0 \in \mathbf{B}_{Y, \rho}^I$ and $\diam (B^0_1 \cap X) < \diam B_2^0$. Then using Theorem~\ref{t3.3} and Corollary~\ref{c2.4} we can find $B_1^1 \in \mathbf{B}^I_{Y, \rho}$ such that $B_1^1$ is a path of the complete multipartite graph $G_{B_1^0,  \rho_{| B_1^0 \times B_1^0}}$ satisfying
\begin{equation}\label{t5.7_p:e17}
 B_1^1 \supseteq B_1^0 \cap X.
 \end{equation}
 Write
 \begin{equation}\label{t5.7_p:e18}
Y_1 \colon = (Y \setminus B_1^0) \cup B_1^1.
\end{equation}
Then $B_1^0, B_2^0, \ldots, B_n^0$ be the paths of the diametrical graph $G_{Y_1, \rho_{|Y_1 \times Y_1}}$ and, consequently,
$$\mathbf{B}_{Y_1, \rho_{|Y_1 \times Y_1}} \subseteq \mathbf{B}_{Y, \rho}$$
holds by Corollary~\ref{c2.4}. Now using Theorem~\ref{t5.5} we see that $(Y_1, \rho_{|Y_1 \times Y_1}) \in \mathbf{UGVL}$ and $X \subseteq Y_1$ by \eqref{t5.7_p:e17}--\eqref{t5.7_p:e18}. Consequently $(Y_1, \rho_{|Y_1 \times Y_1})$ is an $\mathbf{UGVL}$-extension of $(X,d)$. Since $B_1^1$ is a proper subset of $B_2^0$, equa\-li\-ty~\eqref{t5.7_p:e18}  implies
$$
\card Y_1 < \card Y.
$$
Hence $(Y, \rho)$ is not a minimal $\mathbf{UGVL}$-extension of $(X,d)$, contrary to the supposition. Thus the equality
$$
\diam (B_1^0 \cap X) = \diam B_1^0
$$
holds.

Similarly we obtain
\begin{equation}\label{t5.7_p:e19}
\diam (B_i^0 \cap X) = \diam B_i^0
\end{equation}
whenever $i=2, \ldots, n$ and $B_i^0 \in \mathbf{B}_{Y, \rho}^I$. Thus \eqref{t5.7_p:e12} holds for all internal nodes of $T_X$ having the first level. Now considering the diametrical graphs $G_{B_1^0, \rho_{|B_1^0 \times B_1^0}}, \ldots, G_{B_n^0, \rho_{|B_n^0 \times B_n^0}}$ instead of the graph $G_{Y, \rho}$ we obtain~\eqref{t5.7_p:e12} for the nodes of the second level and so on. Consequently \eqref{t5.7_p:e12} holds for every $B \in \mathbf{B}_{Y, \rho}^I$, due to the finiteness of $(Y, \rho)$ and the equality
$$
V(T_Y) = \mathbf{B}_{Y,\rho}.
$$

Let $T_X$ and $T_Y$ be the representing trees of $(X,d)$ and, respectively, $(Y, \rho)$. Using Proposition~\ref{p5.7} we can find a subset $Y_0$ of $Y$ such that for every $y_0 \in Y_0$, the singleton $\{y_0\}$ is a leaf of an internal node $F(B)$ with
$$
B \in \mathbf{B}_{X,d} \setminus \mathbf{Cs}_{X,d}
$$
and, in addition, for different points $y_1, y_2 \in Y_0$, the singletons $\{y_1\}$ and $\{y_2\}$  are leaves of different internal nodes of $T_Y$. Removing from $T_Y$ all leaves of type $\{y\}$  for $y \in Y \setminus (X \cup Y_0)$ we obtain a labeled rooted subtree $T^1$ of $T_Y$. Let us define a subset $Y_1$ of $Y$ as
$$
Y_1 \colon = X \cup Y_0.
$$
Using Theorems~\ref{t1.10_from7} and \ref{lem1.11_from7} we can prove that $T^1$ and the representing tree $T_{Y_1}$ of the ultrametric space $(Y_1, \rho_{|Y_1 \times Y_1})$ are isomorphic as labeled rooted trees.
We only note that
$$
\diam B = \diam (B \cap X)
$$
holds for every $B \in \mathbf{B}^I_{Y_1, \rho_{|Y_1 \times Y_1}}$ because \eqref{t5.7_p:e12} and
$$
B \cap X \subseteq B \cap Y_1 \subseteq B
$$
are valid for every $B \in \mathbf{B}^I_{Y, \rho}$.
If follows directly from the definition of $T^1$ that every internal node of $T^{1}$ has a leaf. Since $T^1$ and $T_{Y_1}$ are isomorphic as labeled rooted trees, every internal node of $T_{Y_1}$ also has a leaf.
Consequently $(Y_1, \rho_{|Y_1 \times Y_1})$ is an $\mathbf{UGVL}$-extension of $(X,d)$. The sets $X$  and $Y_0$ are disjoint and the equality
$$
\card Y_0 = \card (\mathbf{B}_{X,d} \setminus \mathbf{Cs}_{X,d})
$$
holds. Consequently we have
$$
\card Y_1 = \card X + \card Y_0 = \card X + \Delta(X,d).
$$
To complete the proof of equality~\eqref{t5.4:e2} is suffices to note that
$$
(Y, \rho)= (Y_1, \rho_{|Y_1 \times Y_1})
$$
because $(Y, \rho)$ is a minimal $\mathbf{UGVL}$-extension of $(X,d)$, and $(Y_1, \rho_{|Y_1 \times Y_1})$ is an $\mathbf{UGVL}$-extension of $(X,d)$ and $Y_1 \subseteq Y$.

$(iii)$. Let $(Y_1, \rho_1)$ and $(Y_2, \rho_2)$ be minimal $(\mathbf{UGVL})$-extensions of $(X,d)$. We must prove that $(Y_1, \rho_1)$ and $(Y_2, \rho_2)$ are isometric as metric spaces. It was noted above that we can consider only the case when
$$
Y_1 \supseteq X \quad \textrm{ and } \quad Y_2 \supseteq X.
$$
It was noted at the final part of the proof of statement $(ii)$ that, for every minimal $\mathbf{UGVL}$-extension $(Y, \rho)$ of $(X,d)$, the representing tree $T_Y$ is isomorphic to the labeled rooted tree which can be obtained from $T_X$ by gluing a leaf to each vertex $B \in \mathbf{B}_{X,d} \setminus \mathbf{Cs}_{X,d}$.

Hence the representing trees $T_{Y_1}$ and $T_{Y_2}$ are isomorphic. Consequently $(Y_1, \rho_1)$ and $(Y_2, \rho_2)$ are isometric by Theorem~\ref{t1.10_from7}.
\end{proof}

Let $\mathfrak{M}$ be a set of finite nonempty ultrametric spaces. An $\mathbf{UGVL}$-space $(Y, \rho)$ is said to be $\mathfrak{M}$-\emph{universal} if $(Y, \rho)$ is an $\mathbf{UGVL}$-extension for every $(X,d) \in \mathfrak{M}$.

We say that $(Y, \rho)$ is \emph{minimal $\mathfrak{M}$-universal} if for every proper $\mathbf{UGVL}$-subspace $(Y_1, \rho_{|Y_1 \times Y_1})$ of $(Y, \rho)$ there is $(X,d) \in \mathfrak{M}$ such that  $(Y_1, \rho_{|Y_1 \times Y_1})$ is non an $\mathbf{UGVL}$-extension of $(X,d)$.

\begin{problem}
    Find condition under which $\mathfrak{M}$ admits a minimal universal $\mathbf{UGVL}$-extension.
\end{problem}

Some results connected to minimal universal metric spaces can be found \cite{BDKP}.

\section{Acknowledgments}

Oleksiy Dovgoshey was supported by the Finnish Society of Sciences and Letters (Project
``Intrinsic Metrics of Domains in Geometric Function Theory and Graphs'').

\providecommand{\bysame}{\leavevmode\hbox to3em{\hrulefill}\thinspace}
\providecommand{\MR}{\relax\ifhmode\unskip\space\fi MR }
\providecommand{\MRhref}[2]{%
  \href{http://www.ams.org/mathscinet-getitem?mr=#1}{#2}
}
\providecommand{\href}[2]{#2}

\end{document}